\documentclass[11pt]{amsart}
\title{The Projective Envelope of a Cuspidal Representation of a Finite Linear Group}
\author{David Paige}

\usepackage{epsfig}
\usepackage{graphicx}
\usepackage{amsfonts}
\usepackage{amssymb}
\usepackage{eucal}
\usepackage{mathrsfs}
\usepackage{amsthm}
\usepackage{stmaryrd}
\usepackage{amsmath}
\usepackage{latexsym}
\usepackage{amscd}
\usepackage{lscape}
\usepackage{amsrefs}
\usepackage[all]{xy}
\usepackage{enumitem}
\usepackage{bbm}
\usepackage{multicol}
\usepackage{tikz}
\usepackage{textcomp}
\usepackage{fix-cm}

\usetikzlibrary{matrix}

\input amssym.def

\newcommand{\N}{\mathbb{N}}

\newcommand{\Q}{\mathbb{Q}}
\newcommand{\Z}{\mathbb{Z}}
\newcommand{\C}{\mathbb{C}}

\newcommand{\F}{{\mathbb F}}

\renewcommand{\O}{\mathcal{O}}

\newcommand{\m}{\mathfrak{m}}

\newcommand{\W}{\mathrm{W}}

\newcommand{\Fr}{\mathrm{Fr}}
\newcommand{\id}{\mathrm{id}}

\newcommand{\Hom}{\mathrm{Hom}}

\newcommand{\Gal}{\mathrm{Gal}}

\newcommand{\End}{\mathrm{End}}
\newcommand{\GL}{\mathrm{GL}}
\newcommand{\Gl}{\mathrm{GL}}

\newcommand{\Tr}{\mathrm{Tr}}

\newcommand{\Ind}{\mathrm{Ind}}
\newcommand{\Irr}{\mathrm{Irr}}

\newcommand{\Rep}{\mathrm{Rep}}

\newcommand{\ord}{\mathrm{ord}}

\newcommand{\diag}{\mathrm{diag}}

\newcommand{\St}{\mathrm{St}}

\newcommand{\mc}[1]{\mathcal{#1}}
\newcommand{\mf}[1]{\mathfrak{#1}}
\newcommand{\ms}[1]{\mathscr{#1}}

\newtheorem{thm}{Theorem}[section]
\newtheorem*{thm*}{Theorem}
\newtheorem{cor}[thm]{Corollary}
\newtheorem{lem}[thm]{Lemma}
\newtheorem{pro}[thm]{Proposition}
\theoremstyle{definition}
\newtheorem{defn}[thm]{Defintion}
\theoremstyle{remark}
\newtheorem{rem}{Remark}

\numberwithin{equation}{section}

\pagestyle{plain}

\begin{document}

\bibliographystyle{plain}

\maketitle

\begin{abstract} Let $\ell$ be a prime and let $q$ be a prime power not divisible by $\ell$.  Put $G=\GL_n(\F_q)$ and fix an irreducible cuspidal representation, $\bar{\pi}$, of $G$ over a sufficiently large finite field, $k$, of characteristic $\ell$ such that $\bar{\pi}$ is not supercuspidal.  We compute the $\W(k)[G]$-endomorphism ring of the projective envelope of $\bar{\pi}$ under the assumption that $\ell>n$.  Our computations provide evidence for a conjecture of Helm relating the Bernstein center to the deformation theory of Galois representations (see \cite{HelmNew}).\end{abstract}

\tableofcontents

\section{Introduction}

\label{S:Introduction}

Let $p$ and $\ell$ be distinct primes and suppose that $F$ is a $p$-adic field.  One of the most celebrated results of modern number theory is the ($\ell$-adic) local Langland correspondence on $F$. The local Langlands correspondence is a canonical well-behaved bijection $$\Irr_\ell(\GL_n(F))\to\Rep^n_\ell(\W_F),$$ where $\Rep^n_\ell(\W_F)$ is the set of continuous $n$-dimensional Frobenius semi-simple representations of the Weil group, $\W_F$, of $F$ over $\bar{\Q}_{\ell}$ and $\Irr_\ell(\GL_n(F))$ is the set of irreducible admissible representations over $\bar{\Q}_\ell$ of the general linear group, $\GL_n(F)$ (see \cite{LLC}, \cite{LLCFrench} and Section 32 of Chapter 7 of \cite{LLC2D}).

Certainly, part of the importance of the local Langlands correspondence is that notions on one side often correspond nicely with notions on the other.  In particular, the notion of semi-simplification on the Weil side corresponds with the notion of supercuspidal support on the general linear side, a bijection known as the ($\ell$-adic) semi-simple Langlands correspondence.

The semi-simple local Langlands correspondence is important partially because, unlike the local Langlands correspondence itself, it translates well to the case of modular representations.  Indeed, if $k$ is the residue field of $\bar{\Q}_\ell$, Vign\'{e}ras has shown the following:

\begin{thm*} There is a unique bijection between supercuspidal supports of $\GL_n(F)$-representations over $k$ and $n$-dimensional semi-simple Weil representations over $k$ that is compatible with the semi-simple local Langlands correspondence and reduction modulo $\ell$.\end{thm*}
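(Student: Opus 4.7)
The strategy is to construct the bijection by lifting to characteristic zero, applying the known $\ell$-adic semi-simple Langlands correspondence, and then reducing modulo $\ell$. First I would show that every continuous semi-simple representation $\bar{\rho}\colon \W_F \to \GL_n(k)$ admits a lift to a continuous Frobenius-semi-simple representation $\rho\colon \W_F \to \GL_n(\bar{\Z}_\ell)$: wild inertia has pro-$p$ image and, since $\ell\neq p$, factors through a finite quotient of order prime to $\ell$; the tame quotient is then controlled by Teichm\"uller lifts of the inertia characters together with a lifted Frobenius eigenvalue. Applying the $\bar{\Q}_\ell$-semi-simple local Langlands correspondence to $\rho$ produces a supercuspidal support $\{\sigma_1,\dots,\sigma_r\}$. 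Since LLC preserves integrality for supercuspidals, each $\sigma_i$ is integral; taking the supercuspidal support of the semi-simplification of each mod-$\ell$ reduction $\bar{\sigma}_i$ and amalgamating yields the candidate image of $\bar{\rho}$ on the $\GL_n$ side.

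\textbf{Main obstacle.} The central difficulty is well-definedness: two lifts $\rho,\rho'$ of the same $\bar{\rho}$ can have genuinely different supercuspidal supports over $\bar{\Q}_\ell$, and one must show that the resulting mod-$\ell$ supercuspidal supports still coincide. I would approach this by showing that the supercuspidal support of the mod-$\ell$ reduction of a supercuspidal $\sigma$ over $\bar{\Q}_\ell$ depends only on the residual Weil parameter $\bar{\tau}$ of the parameter $\tau$ attached to $\sigma$. In effect, the reducibility behavior of $\tau$ under reduction must match the breakup of the cuspidal $\bar{\sigma}$ into supercuspidals. Making this precise requires the explicit classification of supercuspidals via types \`a la Bushnell--Kutzko together with a careful comparison of their reductions, or equivalently a Deligne--Lusztig analysis tracking the Frobenius data; this compatibility is the technical heart of the statement.

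\textbf{Bijectivity and uniqueness.} Surjectivity comes by running the construction in reverse: any supercuspidal support over $k$ can be lifted component-wise to a cuspidal representation over $\bar{\Q}_\ell$ (using the existence of cuspidal lifts for $\ell$-modular supercuspidals), whose $\bar{\Q}_\ell$-supercuspidal support then transfers under inverse LLC to a Weil parameter whose residual semi-simplification is the prescribed semi-simple Weil representation over $k$. Injectivity is a corollary of the well-definedness analysis read in the opposite direction: distinct residual Weil representations are detected by the reduction of their supercuspidal supports. For uniqueness, compatibility with $\bar{\Q}_\ell$-LLC$^{\mathrm{ss}}$ and reduction mod $\ell$ forces the value of the bijection on every supercuspidal support that lifts to characteristic zero; since every mod-$\ell$ semi-simple Weil representation lifts by the first paragraph, the bijection is uniquely determined.
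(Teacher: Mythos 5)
The paper does not actually prove this theorem; it is Vign\'{e}ras's result and the paper simply cites \cite{ModLLC}. So the right comparison is with Vign\'{e}ras's argument, and your scaffolding is the standard one: lift the residual Weil representation to characteristic zero, transport via the $\bar{\Q}_\ell$ semi-simple correspondence, reduce modulo $\ell$, and then fight for well-definedness. That is indeed the shape of the proof, and your paragraphs on lifting, on uniqueness (every residual datum lifts, so compatibility pins down the map), and on surjectivity (Vign\'{e}ras's theorem that $\ell$-modular supercuspidals lift) are all pointed in the right direction.

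The problem is that your ``Main obstacle'' paragraph names the theorem rather than proving it. The assertion that the mod-$\ell$ supercuspidal support of the reduction of a supercuspidal $\sigma$ over $\bar{\Q}_\ell$ depends only on the semi-simplified residual Weil parameter $\bar{\tau}$ of $\rec(\sigma)$ is not a lemma one invokes; it \emph{is} the content of the theorem. All the substance --- the Bushnell--Kutzko/type-theoretic description of supercuspidals, the comparison of level-zero parts with Deligne--Lusztig parameters for $\GL_m(\F_{q^f})$, the congruence properties that match the ``cuspidal but not supercuspidal'' phenomenon mod $\ell$ with reducibility of $\bar{\tau}$, and the matching of the block/decomposition-matrix combinatorics on both sides --- lives inside that one sentence, and you explicitly punt on it (``this compatibility is the technical heart''). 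As written this is a plan for a proof, not a proof: if a referee deleted your middle paragraph and replaced it with ``by Vign\'{e}ras'' nothing would be lost, which is a sign that the hard step has not been carried out. A secondary, smaller soft spot: your injectivity clause (``distinct residual Weil representations are detected by the reduction of their supercuspidal supports'') is circular unless you first establish that the atom-level map from irreducible mod-$\ell$ Weil representations to mod-$\ell$ supercuspidals is itself a bijection; that, again, is exactly the well-definedness statement you deferred.
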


\begin{proof} See \cite{ModLLC}.\end{proof}

For obvious reasons, Vign\'{e}ras's correspondence is often called the $\ell$-modular semi-simple local Langlands correspondence.

One is led to consider whether these correspondences can be understood geometrically.  To fix ideas, let $\bar{\pi}$ be an irreducible representation of $\GL_n(F)$ over $k$ such that $\bar{\pi}$ is cuspidal but not supercuspidal.  Denote by $\bar{\rho}$ the semi-simple Weil representation over $k$ corresponding to $\bar{\pi}$ via the $\ell$-modular semi-simple local Langlands correspondence.  Attached to $\bar{\rho}$ is the framed universal deformation ring, $R_{\bar{\rho}}^\boxempty$, which parameterizes lifts of $\bar{\rho}$ together with a choice of basis.  One is led to consider whether a corresponding algebraic object can be found on the general linear side of local Langlands.

An important objection to this end is the \emph{Bernstein center}, which for any category $\mc{A}$, is the endomorphism ring of the identity functor on $\mc{A}$.  Classically, the Bernstein center has been important in the study of representations of $\GL_n(F)$.  In particular, Bernstein and Deligne were able calculate the center of the category, $\Rep_\C(\GL_n(F))$, of smooth $\C$-representations of $\GL_n(F)$ (see \cite{BD}).  Moreover, they give a decomposition of the category into a product of blocks (called Bernstein components) and a description of the center of each block, which they show to be a finitely generated $\C$-algebra.  These results can also be translated to the field $\bar{\Q}_\ell$.

In consideration of a geometric interpretation of the local Langlands correspondence, Helm has considered the situation for representations over the Witt vectors, $\W(k)$.  In particular, he obtains a block decomposition of the category $\Rep_{\W(k)}(\GL_n(F))$ which is analogous to, but coarser than, that obtained by Bernstein and Deligne (Theorem 10.8 of \cite{Helm}).  Denote by $\mc{C}_{\bar{\pi}}$ the block corresponding to $\bar{\pi}$.  Then if $A_{\bar{\pi}}$ is the Bernstein center of $\mc{C}_{\bar{\pi}}$, Helm shows that $A_{\bar{\pi}}$ is finitely generated as an $\W(k)$-algebra (Theorem 12.1 of \cite{Helm}).

Given any irreducible representation in $\mc{C}_{\bar{\pi}}$, Schur's lemma gives a map $A_{\bar{\pi}}\to k$ and Helm shows (Theorem 12.2 of \cite{Helm}) that this relationship induces a bijection between maps $A_{\bar{\pi}}\to k$ and possible supercuspidal supports of representations in $\mc{C}_{\bar{\pi}}$.  In particular, the supercuspidal support of $\bar{\pi}$ gives a maximal ideal, $\mf{m}_{\bar{\pi}}$, of $A_{\bar{\pi}}$.  Helm has conjectured that there is an isomorphism $$(A_{\bar{\pi}})_{\mf{m}_{\bar{\pi}}}\to (R_{\bar{\rho}}^\boxempty)^{\GL_n}$$ (where the $\GL_n$ superscript denotes invariants under the change of basis action of $\GL_n$ and the $\mf{m}_{\bar{\pi}}$ denotes completion at $\mf{m}_{\bar{\pi}}$), which is compatible with the local Langlands correspondence (see Conjecture 7.5 of \cite{HelmNew}).  It is therefore advantageous to study the structure of the ring $A_{\bar{\pi}}$.

To this end, attached to $\bar{\pi}$ are three integers $d$, $e$, and $f$ and a particular representation, $\bar{\sigma}$, of $\GL_{\frac{n}{ef}}(\F_{q^f})$ (by the theory of types: see IV.1.1-IV.1.3 and IV.3.1B of \cite{MFV2}).  We denote by $P_{\bar{\sigma}}\to\bar{\sigma}$ the projective envelope of $\bar{\sigma}$ in the category of representations over $\W(k)$.  There is also a unique characteristic zero representation, $\St_{\bar{\sigma}}$, such that $\St_{\bar{\sigma}}$ is not supercuspidal but an $\ell$-modular reduction of $\St_{\bar{\sigma}}$ contains a copy of $\bar{\sigma}$. $\St_{\bar{\sigma}}$ is a so-called generalized Steinberg representation.  Moreover, if $L$ is a sufficiently large finite extension of the field of fractions of $\W(k)$, $P_{\bar{\sigma}}\otimes L$ contains a unique summand isomorphic to $\St_{\bar{\sigma}}$ and we denote by $I_0$ the set of endomorphisms of $\End(P_{\bar{\sigma}})$ that annihilate this summand. Helm has then shown that $$A_{\bar{\pi}}=\End(P_{\bar{\sigma}})\left[T_1,\ldots,T_{\frac{n}{def}}^{\pm1}\right]/\Big<T_1,\ldots,T_{\frac{n}{def}-1}\Big>\cdot I_0$$ (see Lemmas 4.7 and 4.8, Proposition 7.21, and Corollaries 10.19 and 11.11 of \cite{Helm}).

In this paper, under the assumption that $\ell>n$, we calculate the ring $\End(P_{\bar{\sigma}})$, motivated by its role in determining the structure of $A_{\bar{\pi}}$ and its relation to $R_\rho^\boxempty$.  We note that the cuspidal representation $\bar{\sigma}$ comes attached with a notion of degree (see Definition \ref{CuspDef}). Explicitly, our main result is the following (see Theorem \ref{Main}).

\begin{thm*} Suppose that $k$ is a finite field of characteristic $\ell$. Let $q$ be a power of a prime distinct from $\ell$. Denote the order of $q$ modulo $\ell$ by $w$ and put $r=\ord_\ell(q^w-1)$. Suppose that $\bar{\sigma}$ is an irreducible cuspidal representation of $G=\GL_n(\F_q)$ over $k$ of degree $d<n$ and let $P_{\bar{\sigma}}$ denote the projective envelope of $\bar{\sigma}$ as a $\W(k)[G]$-module.  Then, under the assumptions that $2\leq n<\ell$ and that $k$ is large enough to contain the $\ell$-regular $|G|$th roots of unity,  $\End(P_{\bar{\sigma}})$ is isomorphic to the subring of $$\W(k)[X]/(X^{\ell^r}-1)$$ of invariants under the map $X\mapsto X^{q^d}$.\end{thm*}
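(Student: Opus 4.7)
Let $T\subset G=\GL_n(\F_q)$ be an anisotropic maximal torus with $T(\F_q)\cong \F_{q^n}^\times$, so that (via Green's theorem and the parametrization of cuspidal representations by types, IV.1 of \cite{MFV2}) $\bar\sigma$ is associated with a character $\bar\theta\colon T(\F_q)\to k^\times$ whose $\F_q$-Frobenius orbit has size $d$. Equivalently, the Weyl-group stabilizer $W_{\bar\theta}\subset W(T,G)\cong \Z/n\Z$ is the cyclic subgroup of order $e=n/d$ generated by $\Fr^d$. The plan is to realize $\End(P_{\bar\sigma})$ as the $W_{\bar\theta}$-invariants of the endomorphism ring of the projective envelope of $\bar\theta$ as a $\W(k)[T(\F_q)]$-module, via an integral modular form of Deligne--Lusztig induction.

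First, I compute $\End_{T(\F_q)}(P_{\bar\theta})$. Since $T(\F_q)$ is cyclic abelian of order $q^n-1$ and the hypothesis $\ell>n$ implies that its $\ell$-part is cyclic of order $\ell^r$, the block of $\W(k)[T(\F_q)]$ containing $\bar\theta$ is canonically isomorphic to the group algebra $\W(k)[T(\F_q)_\ell]\cong \W(k)[X]/(X^{\ell^r}-1)$, where $X$ is identified with a fixed generator of $T(\F_q)_\ell\cong \mu_{\ell^r}$. Hence $\End_{T(\F_q)}(P_{\bar\theta})\cong \W(k)[X]/(X^{\ell^r}-1)$ canonically.

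Next, and crucially, I pass from $T$ to $G$. Using integral modular Deligne--Lusztig theory --- in the style of Bonnaf\'e--Rouquier, and building on the type-theoretic description of the cuspidal block in \cite{MFV2} together with \cite{Helm} --- one aims to realize the block of $\bar\sigma$ in $\W(k)[G]$-mod as Morita equivalent to the crossed-product algebra $\W(k)[X]/(X^{\ell^r}-1)\rtimes W_{\bar\theta}$. Here the generator of $W_{\bar\theta}$ acts on $T(\F_q)$ by the $q^d$-power map, and hence on $X$ by $X\mapsto X^{q^d}$. Under this equivalence, $P_{\bar\sigma}$ corresponds to the ``Steinberg/sign'' summand of the crossed product: since $\ell>n\ge e$ ensures $\ell\nmid|W_{\bar\theta}|$, the crossed product decomposes cleanly along the irreducible characters of the cyclic group $W_{\bar\theta}$, and the summand supporting the cuspidal constituent $\bar\sigma$ (rather than the parabolically induced ones) has endomorphism ring exactly the invariant subring $(\W(k)[X]/(X^{\ell^r}-1))^{W_{\bar\theta}}$. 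This yields the claimed formula.

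The main obstacle is to establish the Morita equivalence rigorously in this integral modular setting: Bonnaf\'e--Rouquier type equivalences deal with an analogous geometric situation, but here one must adapt them to a cuspidal non-supercuspidal block by combining them with the cuspidal type theory of \cite{MFV2}. A second, related issue is identifying $P_{\bar\sigma}$ with the invariant summand rather than another isotypic piece. This can be checked by a rank comparison via Brauer reciprocity: the $\W(k)$-rank of $\End(P_{\bar\sigma})$ equals the number of irreducible $L$-constituents of $P_{\bar\sigma}\otimes L$, and these are indexed by $\Fr^d$-orbits on lifts of $\bar\theta$, which under the identification above become $\langle q^d\rangle$-orbits on $\mu_{\ell^r}$, matching the rank of the claimed invariant subring. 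The hypothesis $\ell>n$ is essential throughout: it guarantees both the cyclicity of the $\ell$-part of the torus and the semisimplicity of $\W(k)[W_{\bar\theta}]$.
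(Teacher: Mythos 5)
Your proposal takes a genuinely different route from the paper, but it contains a fatal gap at its central step, namely the claimed Morita equivalence between the block of $\bar{\sigma}$ and the crossed-product algebra $\W(k)[X]/(X^{\ell^r}-1)\rtimes W_{\bar\theta}$. After reduction (Theorem \ref{Morita}) we may take $\bar\sigma = \bar\pi_1$ in $G = \GL_w(\F_q)$ with $w$ the order of $q$ modulo $\ell$; then the relevant block is the unipotent block with cyclic defect group of order $\ell^r$, and $\W(k)[X]/(X^{\ell^r}-1)\rtimes W_{\bar\theta}$ is (up to Morita equivalence) its Brauer correspondent in $N_G(D)$. By Fong--Srinivasan's determination of Brauer trees for finite general linear groups, the Brauer tree of the unipotent block of $\GL_w(\F_q)$ is a \emph{line} with $w$ edges, whereas the Brauer correspondent has a \emph{star}. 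For $w\geq 2$ these trees are not isomorphic, so the two algebras are not Morita equivalent (they are only derived equivalent, via Rickard's proof of Brou\'e's conjecture for cyclic defect). A derived equivalence does not carry the projective module $P_{\bar\sigma}$ to a projective module of the crossed product, and does not preserve $\Hom$-rings of objects, so the computation $\End(P_{\bar\sigma})\cong A^{W_{\bar\theta}}$ cannot be read off from it.

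Your rank check via Brauer reciprocity is a real sanity check but it cannot close this gap: a line Brauer tree and a star Brauer tree with the same number of edges and the same exceptional multiplicity give identical decomposition numbers for the projective adjacent to the exceptional vertex, hence identical $\W(k)$-ranks for $\End(P_{\bar\sigma})$. The ring structure of $\End(P_{\bar\sigma})$ is a finer invariant (it encodes the congruences between the scalars by which the center acts on the various characteristic-zero constituents of $P_{\bar\sigma}\otimes L$), and it is exactly this structure that must be computed rather than transported. You also claim that $\ell>n$ is needed for the $\ell$-part of $\F_{q^n}^\times$ to be cyclic; this is automatic since $\F_{q^n}^\times$ is cyclic, though $\ell>n$ is indeed used in the paper in other ways (e.g.\ to ensure $|C|$-type orders are well-controlled and $\ell\nmid n$).

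For contrast, the paper avoids any equivalence of categories beyond the Bonnaf\'e reduction: it realizes $P_{\bar\pi}$ inside the Gelfand--Graev representation as $e_{\bar\pi}\Ind_U^G\psi$, shows $\End(P_{\bar\pi})$ is the image of the Bernstein center $e_{\bar\pi}Z(\W(k)[G])$ inside $\End_{L[G]}(P_{\bar\pi}\otimes L)\cong \prod L$, and then computes that image directly from Deligne--Lusztig/Green character formulas (Lemmas \ref{ActionTrace}--\ref{Signs} and Propositions \ref{Realized}--\ref{NotDivisible2}). The identification with the invariant subring of $\W(k)[X]/(X^{\ell^r}-1)$ is then established separately by the elementary but delicate cyclotomic computations of Section \ref{S:Alg} (Proposition \ref{UnityQInv}, Corollary \ref{PullBackModL}, Theorem \ref{qInvCylAlg}). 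That approach is more computational but sidesteps entirely the question of what the block is Morita equivalent to, which is exactly the point on which your argument fails.
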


Moreover, we will give our isomorphism explicitly by finding a generator of the ring of invariants of $$\W(k)[X]/(X^{\ell^r}-1)$$ and giving its action on the direct summands of $P_{\bar{\sigma}}\otimes_{\W(k)}L$ (see the remark following Theorem \ref{Main}).

Finally, in Section \ref{S:Helm}, we will show that the results of our calculations are compatible with the conjecture of Helm discusses.  More explicitly, we will find, using our calculations, a copy of $A_{\bar{\pi}}$ in $(R_{\bar{\rho}}^\boxempty)^{\GL_n}$ (since $(R_{\bar{\rho}}^\boxempty)^{\GL_n}$ is complete, this will imply that there is also a copy of $(A_{\bar{\pi}})_{\m_{\bar{\pi}}}$).

\section{A Characterization of the Projective Envelope}

\label{S:ProjEnv} For the rest of the paper, we fix a prime $\ell$ and a finite field, $k$, of characteristic $\ell$.  We denote by $K$ the field of fractions of the Witt Vectors, $\W(k)$, of $k$.  We also fix a prime power $q$ that is not divisible by $\ell$ and an integer $n\geq 2$.  Put $G=\GL_n(\F_q)$ and choose an algebraic closure of $\F_q$ so that we may speak of $\F_{q^d}$ for any $d$.  We denote by $L$ a finite extension of $K$ which is large enough to admit all of the characteristic zero representations of $G$ and chose an identification between $\F_{q^{n!}}$ and its character group over $L$.  We assume that $k$ is large enough so that $L$ may be chosen to have residue field $k$.  We also assume that $\ell>n$.

In this section, we will first reduce our work to the consideration of a certain special case (namely that of a cuspidal representation associated to $1\in\F_{q^n}^\times$ in a sense given below).  We will then, following \cite{Helm}, give a nice characterization for the projective envelope in this case (though the analogous characterization works in general).  Finally,  we will outline our strategy for computing the corresponding ring of endomorphisms.  We remark that our strategy can be employed in this general case, but passing to the simple case greatly simplifies the calculations (for the details of the general calculation, see \cite{thesis}).

Since $G$ is a reductive group, Deligne-Lusztig Theory, together with our identification between $\F_{q^n}^\times$ and its character group, gives a canonical bijection between the irreducible representations of $G$ over $L$ (respectively $k$) and the conjugacy classes of $G$ (respectively the $\ell$-regular conjugacy classes) For a summary of this construction, see Section 5 of \cite{Helm} or Section III.2 of \cite{MFV}.

In particular, if $s\in\F_{q^n}$ is the $\ell$-regular part of an element of $\F_{q^n}$ of degree $n$ over $\F_q$, we have a corresponding cuspidal representation $\bar{\pi}_s$ over $k$.  Here we are of course representing a semi-simple conjugacy class in $G$ with a field element by choosing one of its eigenvalues (such a choice being valid up to $\F_q$-Galois conjugacy).  All the cuspidal representations of $G$ over $k$ have the form $\bar{\pi}_s$ for some such $s$.

\begin{defn}\label{CuspDef} Let $\bar{\pi}_s$ be a cuspidal representation of $G$ over $k$.   The \emph{degree} of $\bar{\pi}_s$ is the degree of $s$ over  $\F_q$.\end{defn}

Given such an $s$, we may consider the collection of irreducible representations of $G$ over $k$ with the same supercuspidal support as $\bar{\pi}_s$.  This collection is a union of blocks (see Theorem 5.6 of \cite{Helm}) and we will denote by $\mc{C}_s$ the corresponding full subcategory of the category of $\W(k)[G]$-modules.

\begin{thm}\label{Morita}  Suppose that $\bar{\pi}_s$ is a cuspidal representation of $G$ of degree $d<n$.  Put $G'=\GL_{n/d}(\F_{q^d})$.  Denote by $\bar{\pi}'_1$ the cuspidal representation of $G'$ coming from $1\in\F_{q^d}^\times$ and let $\mc{C}'$ be the corresponding full subcategory of the category of $\W(k)[G']$-modules.  Then we have an equivalence of categories $\mc{C}_s\to\mc{C}'$ with $\bar{\pi}_s\mapsto\bar{\pi}'_1$.  \end{thm}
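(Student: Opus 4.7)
The plan is to deduce this Morita equivalence from the modular Jordan decomposition of blocks for finite general linear groups, in its integral ($\W(k)$-linear) form due to Bonnafé--Rouquier. Throughout, the hypothesis $\ell>n$ will make several technicalities evaporate.

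First, I will identify $G'$ with the centralizer of $s$ in $G$. Fixing an $\F_q$-basis of $\F_{q^d}$ and using the regular representation identifies $G'=\GL_{n/d}(\F_{q^d})$ with a Levi subgroup $M\subset G$. The element $s\in\F_{q^d}^\times$ sits in the center of $M$ as a scalar; since $s$ has degree $d$ over $\F_q$, its minimal polynomial over $\F_q$ is irreducible of degree $d$, and so the $G$-centralizer (acting on $\F_q^n$ with $n/d$ copies of each of the $d$ Galois-conjugate eigenvalues) is exactly $M=G'$. Next, I will invoke Bonnafé--Rouquier: Deligne--Lusztig induction from $G'$ to $G$ yields a $\W(k)$-linear Morita equivalence between the sum of blocks of $\W(k)[G']$ in the unipotent Lusztig series and the sum of blocks of $\W(k)[G]$ in the Lusztig series at $s$. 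Under $\ell>n$, the Weyl group $S_{n/d}$ has order prime to $\ell$, and each sum reduces to a single block, namely $\mc{C}'$ and $\mc{C}_s$; this gives the desired equivalence.

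To finish, I will verify that $\bar{\pi}_s$ and $\bar{\pi}'_1$ correspond. Each is characterized inside its block as the unique cuspidal irreducible, and the Morita functor preserves this characterization because Deligne--Lusztig induction carries the (virtual) character of $G'$ attached to the trivial character of a suitably regular elliptic torus to the cuspidal $\pi_s$ of $G$, and this compatibility persists upon reduction modulo $\ell$.

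The main obstacle will be the integral form of Jordan decomposition: I need a $\W(k)$-linear equivalence of module categories, not merely an $L$- or $k$-linear one, which requires the full strength of Bonnafé--Rouquier for $\ell$-regular semisimple elements. A secondary technical point is matching the cuspidal simples precisely; this can be done either by the Deligne--Lusztig argument above or, alternatively, by exhibiting progenerators for both blocks whose endomorphism rings are both canonically the Iwahori--Hecke algebra $H_{q^d}(S_{n/d})$ over $\W(k)$ (by Howlett--Lehrer, with the cocycle triviality that one must check reducing to a standard computation for $\GL_n$ under $\ell>n$), and then observing that the cuspidal simple corresponds to the same Hecke-algebra module on each side.
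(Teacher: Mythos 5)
Your plan follows essentially the same route as the paper's: the paper proves this by citing Theorem B$'$ of Section 11 of \cite{Bonnafe}, i.e.\ the $\W(k)$-linear Jordan decomposition of blocks for $\GL_n$ via Deligne--Lusztig induction from the (rational Levi) centralizer $G'=\GL_{n/d}(\F_{q^d})$ of $s$, exactly the Bonnaf\'e--Rouquier-style equivalence you invoke, with the matching of cuspidals absorbed into the cited statement. The one step the paper spells out that your sketch folds in silently is the twist by the central linear character $\chi_s$ of $G'$, which is needed to pass from the series of $G'$ at $s$ (where the cited equivalence naturally lands, carrying $\bar{\pi}_s$ to $\bar{\pi}'_s$) to the unipotent series containing $\bar{\pi}'_1$; your phrase ``unipotent Lusztig series'' implicitly incorporates this twist.
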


\begin{proof}   By assumption, $s$ is the $\ell$-regular part of a generator of $\F_{q^n}/\F_{q^d}$.  We may thus consider the cuspidal representation $\bar{\pi}'_s$ of $G'$ and the associated subcategory $\mc{C}'$.  Theorem B' of Section 11 of \cite{Bonnafe} gives an equivalence of categories $\mc{C}_s\to\mc{C}_s'$ that takes $\bar{\pi}_s$ to $\bar{\pi}_s'$.  The equivalence named in the theorem comes after twisting by $\chi_s$, where $\chi_s$ is the character of $\F_d^\times$ associated to $s$.\end{proof}

Theorem \ref{Morita} reduces our work to the case that our cuspidal representation is $\bar{\pi}=\bar{\pi}_1$.  We remark that $n$ must be the order of $q$ modulo $\ell$ and that the supercuspidal support of $\bar{\pi}_1$ is $n\bar{\mathbbm{1}}$, where $\bar{\mathbbm{1}}$ is the trivial representation of $\F_q^\times$ (see Part (b) of the corollary of Section III.2.5 of \cite{MFV}).

To give our characterization of the projective envelope of $\bar{\pi}$, we let $U$ be the subgroup of $G$ consisting of strictly upper-triangular matrices and let $\psi:U\to\W(k)$ be a fixed generic character (choose a nontrivial homomorphism $\psi':\F_q\to\W(k)^\times$ and define the image under $\psi$ of an element in $U$ to be the value under $\psi'$ of the sum of the values of the off-diagonal entries).  Recall that an irreducible representation is generic if and only if it contains a vector on which $U$ acts by $\psi$.

We also denote by $e_{\bar{\pi}}$ the idempotent in $L[G]$ which is the sum of all the irreducible representations of $G$ over $L$ with supercuspidal support $n\mathbbm{1}$.

\begin{pro} The idempotent $e_{\bar{\pi}}$ lies in $\W(k)[G]$ and the representation $P_{\bar{\pi}}=e_{\bar{\pi}}\Ind_U^G\psi$ is a projective envelope for $\bar{\pi}$ in the category of $\W(k)[G]$-modules.\end{pro}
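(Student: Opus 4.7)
My plan is to establish the proposition in three stages, following the strategy of Helm in \cite{Helm}. First, I would show that $e_{\bar\pi}$ lies in $\W(k)[G]$. Theorem 5.6 of \cite{Helm}, cited above, shows that $\mc{C}_s$ is a union of $\W(k)$-blocks; the primitive central idempotents of $\W(k)[G]$ cutting out those blocks remain central idempotents in $L[G]$, and their sum is exactly $e_{\bar\pi}$ by the compatibility of supercuspidal support with reduction modulo $\ell$. Hence $e_{\bar\pi}\in\W(k)[G]$.

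Second, I would verify that $\Gamma := \Ind_U^G\psi$ is $\W(k)[G]$-projective. Since $|U|$ is a power of $p$ and hence a unit in $\W(k)$, Maschke's theorem gives that every $\W(k)$-free $\W(k)[U]$-module is projective; in particular, $\psi$ is projective, and induction preserves projectivity. Hence $\Gamma$, and its direct summand $P_{\bar\pi}=e_{\bar\pi}\Gamma$, are $\W(k)[G]$-projective.

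Third, I would show that $P_{\bar\pi}$ has $\bar\pi$ as its unique simple quotient, which suffices: a projective module with a single simple module in its head is the projective envelope of that simple module. By Frobenius reciprocity,
\[
\Hom_{\W(k)[G]}(P_{\bar\pi},\,M)\;\cong\;\Hom_{\W(k)[U]}(\psi,\,e_{\bar\pi}M)
\]
for every $\W(k)[G]$-module $M$, and the right hand side, for $M$ irreducible over $k$ lying in $\mc{C}_s$, is the space of $\psi$-Whittaker functionals on $M$, which is nonzero precisely when $M$ is generic. Two results of Vign\'eras (see \cite{MFV}) now conclude the argument: any irreducible cuspidal $k$-representation of $G$ is generic, so $\bar\pi$ itself is generic; and uniqueness of the Whittaker model in the $\ell$-modular setting gives that there is a single generic irreducible $k$-representation with supercuspidal support $n\mathbbm{1}$. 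Consequently $\Hom_{\W(k)[G]}(P_{\bar\pi},\bar\pi)$ is one-dimensional, $\Hom_{\W(k)[G]}(P_{\bar\pi},M)=0$ for every other irreducible $k$-representation $M$, and $P_{\bar\pi}$ is indecomposable with head $\bar\pi$.

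I expect the main obstacle to be the last step, and specifically the input that there is a unique generic irreducible $k$-representation of $G$ with a given supercuspidal support: without this uniqueness one could conclude only that $P_{\bar\pi}$ is a direct sum, over all generic irreducibles in $\mc{C}_s$, of their projective envelopes. The first two steps are essentially formal and rely only on standard properties of block idempotents, coprime-order induction, and the modular semisimple local Langlands correspondence.
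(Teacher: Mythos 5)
The paper's own proof is simply a citation to Theorem 5.6 and Proposition 5.6 of \cite{Helm}, so there is no internal argument here to compare against; your sketch is a sound reconstruction of what those results cover, and the machinery you invoke --- Maschke plus induction for projectivity of the Gelfand--Graev module, Frobenius reciprocity identifying $\Hom_{\W(k)[G]}(P_{\bar\pi},M)$ with the $\psi$-eigenspace of $e_{\bar\pi}M$, and multiplicity one --- is the right one.

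One point should be stated more carefully. ``Uniqueness of the Whittaker model'' ordinarily refers only to the multiplicity-one statement $\dim\Hom_{\W(k)[U]}(\psi,M)\leq 1$, and by itself that shows only that $\Ind_U^G\psi$ is a \emph{multiplicity-free} direct sum of projective indecomposables indexed by the generic irreducible $k$-representations. What you actually need in addition is Vign\'eras's classification result in \cite{MFV} that the generic irreducible $k$-representations of $G$ are in bijection with mod-$\ell$ supercuspidal supports, so that the union of blocks cut out by $e_{\bar\pi}$ contains exactly one generic. Combined with ``cuspidal implies generic,'' which identifies that unique generic as $\bar\pi$, this gives indecomposability of $e_{\bar\pi}\Ind_U^G\psi$. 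Your closing paragraph correctly isolates this uniqueness as the crux; the only quibble is that it should be credited as a classification theorem in its own right rather than folded into ``uniqueness of the Whittaker model.''
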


\begin{proof} See Theorem 5.6 and Proposition 5.6 of \cite{Helm}.\end{proof}

The idempotent $e_{\bar{\pi}}$ thus corresponds to a direct summand of the category of $\W(k)[G]$-modules.  We denote this full subcategory by $\mc{C}$. We also note that there is a unique characteristic zero generic representation, $\St_L$, which supercuspidal support $n\mathbbm{1}$.  In fact, $\St_L$ is the so-called \emph{Steinberg representation} of $G$ which is defined for any reductive algebra group over a finite or local field (see \cite{SteinHis}).

\begin{pro}\label{CharZeroProjEnv} The representation $P_{\bar{\pi}}\otimes_{\W(k)}L$ is the direct sum of a single copy of the Steinberg representation, $\St_L$, and a single copy of each of the supercuspidal representations that lift $\bar{\pi}$.\end{pro}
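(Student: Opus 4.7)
The plan is to combine two classical facts. First, the Gelfand--Graev representation $\Ind_U^G \psi \otimes_{\W(k)} L$ decomposes as a multiplicity-free sum of all irreducible generic $L$-representations of $G$. Second, among irreducible $L$-representations in the block $\mc{C}$, the generic ones can be enumerated explicitly, and they turn out to be precisely the summands listed in the proposition.

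Concretely, since $P_{\bar{\pi}} = e_{\bar{\pi}}\Ind_U^G\psi$ and $e_{\bar{\pi}}\in\W(k)[G]$, base change to $L$ gives
$$P_{\bar{\pi}} \otimes_{\W(k)} L \;=\; e_{\bar{\pi}}\cdot\bigl(\Ind_U^G \psi \otimes_{\W(k)} L\bigr).$$
By the classical Gelfand--Graev theorem for finite reductive groups, $\Ind_U^G\psi\otimes L$ decomposes as a multiplicity-free direct sum of the irreducible generic representations of $G$ over $L$. Applying the block idempotent $e_{\bar{\pi}}$ retains exactly those summands lying in the block $\mc{C}$.

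It therefore suffices to identify the generic irreducible $L$-representations in $\mc{C}$. These fall into two classes. The first class consists of the supercuspidal representations of $G$ over $L$ whose mod-$\ell$ reductions contain $\bar{\pi}$ as a Jordan--Hölder factor, i.e.\ the supercuspidal lifts of $\bar{\pi}$; every cuspidal representation of a finite general linear group admits a (unique) Whittaker model, hence is generic, so each such lift contributes one summand to $P_{\bar{\pi}}\otimes L$. The second class consists of the irreducible constituents of the unramified principal series $\Ind_B^G\mathbbm{1}$ over $L$, whose reductions also lie in $\mc{C}$ since $\bar{\pi}$ itself is a subquotient of $\Ind_B^G\bar{\mathbbm{1}}$. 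Among these unipotent representations, the classical theory of $\GL_n(\F_q)$ (Steinberg/Zelevinsky) identifies exactly one as generic, namely the Steinberg representation $\St_L$.

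The main obstacle in the plan is this last identification: checking that $\St_L$ is the unique generic irreducible constituent of $\Ind_B^G\mathbbm{1}$ over $L$. One can deduce this from the Bernstein--Zelevinsky classification of generic irreducibles for $\GL_n(\F_q)$, or by a direct Frobenius reciprocity calculation of the Whittaker functionals on each unipotent principal-series constituent using the Bruhat decomposition. Once this step is in place, the proposition follows immediately from the multiplicity-one property of the Gelfand--Graev representation, with each named summand appearing exactly once.
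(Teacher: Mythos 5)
The paper's own "proof" is a citation to Corollary 5.14 of \cite{Helm}, so any substantive argument you give is a different route. Your structure — Gelfand--Graev decomposition of $\Ind_U^G\psi\otimes L$, followed by cutting with the block idempotent $e_{\bar{\pi}}$, followed by enumerating the generic irreducibles in the block — is in fact the natural way to prove the statement, and the first two steps are sound.

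The gap is in the enumeration step. You produce two classes of $L$-representations lying in $\mc{C}$ — the supercuspidal lifts of $\bar{\pi}$ and the constituents of $\Ind_B^G\mathbbm{1}$ — and correctly identify the generic members of each, but you never argue that these two classes \emph{exhaust} the irreducible $L$-representations of $\mc{C}$. That exhaustiveness is exactly the nontrivial content of the proposition: a priori there could be a generic $L$-representation with mixed non-trivial cuspidal support whose mod-$\ell$ reduction nevertheless has supercuspidal support $n\bar{\mathbbm{1}}$, and such a representation would also be retained by $e_{\bar{\pi}}$. To rule this out one must argue that any supercuspidal $L$-representation of a Levi $\GL_m(\F_q)$ whose mod-$\ell$ reduction has supercuspidal support $m\bar{\mathbbm{1}}$ must have $m=1$ or $m=n$ (the argument is the same $\ell$-power-torsion/degree argument used in the paper's Lemma \ref{CuspSupp}, exploiting that $n$ is the multiplicative order of $q$ mod $\ell$); only then does the cuspidal support of a representation in $\mc{C}$ collapse to your two cases. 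Without this step the claim "these fall into two classes" is an assertion, not a proof, and it is precisely what the paper is delegating to Helm.
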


\begin{proof} See Corollary 5.14 of \cite{Helm}.\end{proof}

Since $P_{\bar{\pi}}$ is projective, we have an injection $$\End_{\W(k)[G]}(P_{\bar{\pi}})\hookrightarrow\End_{L[G]}(P_{\bar{\pi}}\otimes_{\W(k)}L).$$ In particular, $\End_{\W(k)[G]}(P_{\bar{\pi}})$ is commutative.

To complete our strategy for computing $\End_{\W(k)[G]}(P_{\bar{\pi}})$, we consider the Bernstein center of the category $\mc{C}$.  Recall from Section \ref{S:Introduction} that the \emph{Bernstein center} of an abelian category $\mc{A}$ is the endomorphism ring of the identity functor $\id:\mc{A}\to\mc{A}$.  In other words, an element of the Bernstein center is a choice of endomorphism on each object of $\mc{A}$ which commutes with any morphism of $\mc{A}$ in the obvious sense.  In particular, the Bernstein center of $\mc{C}$ is $e_{\bar{\pi}}Z(\W(k)[G])$.

\begin{pro} Suppose that $Q$ is a faithfully projective object in the abelian category $\mc{A}$.  Then $M\mapsto\Hom(P,M)$ is an equivalence of categories from $\mc{A}$ to the category of right $\End_{\mc{A}}(Q)$-modules.\end{pro}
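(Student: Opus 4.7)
This is the standard Morita theorem in categorical form, and the plan follows the classical recipe. I would define the functor
$$F : \mc{A} \to \mathrm{Mod}\text{-}\End_{\mc{A}}(Q), \qquad F(M) = \Hom_{\mc{A}}(Q, M),$$
where $\End_{\mc{A}}(Q)$ acts on the right by precomposition ($\phi \cdot f = \phi \circ f$). Functoriality is automatic. Two structural properties of $F$ will carry all the weight: it is \emph{exact}, because $Q$ is projective; and it is \emph{faithful} (in fact conservative), which is the ``faithfully'' part of the hypothesis, encoding that $Q$ is a generator so that $\Hom_{\mc{A}}(Q, M) = 0$ forces $M = 0$.

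The next step is to construct a quasi-inverse $G$. Given a right $\End_{\mc{A}}(Q)$-module $N$, I would choose a free presentation $\End_{\mc{A}}(Q)^{(J)} \to \End_{\mc{A}}(Q)^{(I)} \to N \to 0$; the connecting map is described by a matrix with entries in $\End_{\mc{A}}(Q)$, and the very same matrix defines a morphism $Q^{(J)} \to Q^{(I)}$ in $\mc{A}$, whose cokernel I take to be $G(N)$. One must verify independence of the presentation and functoriality in $N$; both reduce, via projectivity of $Q$, to the lifting of maps between free $\End_{\mc{A}}(Q)$-modules through the comparison diagram of presentations.

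To conclude, I would show that $(G, F)$ is an adjoint pair whose unit $\eta : \id \to FG$ and counit $\varepsilon : GF \to \id$ are both natural isomorphisms. On a free module $N = \End_{\mc{A}}(Q)^{(I)}$, the map $\eta_N$ is the tautological identification $\End_{\mc{A}}(Q)^{(I)} \to \Hom_{\mc{A}}(Q, Q^{(I)})$; assuming this, the five lemma applied to a presentation and the exactness of $F$ give $\eta_N$ an isomorphism for arbitrary $N$. For $\varepsilon_M$: since $Q$ is a generator, $M$ admits an epimorphism $Q^{(I)} \to M$ whose kernel is in turn covered by some $Q^{(J)}$, producing a presentation $Q^{(J)} \to Q^{(I)} \to M \to 0$ that realizes $M$ as $G(F(M))$ canonically, and the triangle identities force $\varepsilon_M$ to be an isomorphism once $\eta_{F(M)}$ is.

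The main obstacle is precisely the identification $\Hom_{\mc{A}}(Q, Q^{(I)}) = \End_{\mc{A}}(Q)^{(I)}$, i.e.\ the claim that $\Hom_{\mc{A}}(Q, -)$ preserves arbitrary coproducts. This is the content of the ``finite type'' or ``small projective'' aspect baked into the notion of faithfully projective; once it is granted, the rest of the argument is formal manipulation of adjunctions, cokernel presentations, and the five lemma.
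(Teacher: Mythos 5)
The paper proves this proposition by citing Roche, Theorem~1.1, and gives no in-text argument, so there is nothing to compare with directly; you have supplied a self-contained proof. Your argument is the standard Morita-theoretic one and it is correct. The three structural inputs --- exactness of $F=\Hom_{\mc{A}}(Q,-)$ from projectivity, conservativity from the generator property, and preservation of arbitrary coproducts from compactness --- are precisely what the equivalence requires, and you rightly flag that the last of these must be part of what \emph{faithfully projective} means; it is in Roche's setting, and in the paper's application $Q_{\bar\pi}$ is a finitely generated $\W(k)[G]$-module with $G$ finite, so compactness holds for free. Two small points worth making explicit if this were written out in full: $\mc{A}$ must have all small coproducts, both to form $Q^{(I)}$ and to extract an epimorphism $Q^{(I)}\twoheadrightarrow M$ from the generator hypothesis; and passing from ``$\eta_{F(M)}$ is an isomorphism'' to ``$\varepsilon_M$ is an isomorphism'' uses conservativity of $F$ together with the triangle identity, not the triangle identity alone. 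Neither is a gap --- both are implicit in your sketch.
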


\begin{proof} See \cite{Roche}, Theorem 1.1.\end{proof}

In particular, we have an isomorphism from the Bernstein center of $\mc{A}$ to $Z(\End_{\mc{A}}(Q))$.  The map takes an element of the Bernstein center to its action on $Q$.  In order to use this last result to our advantage, we construct a faithfully projective object in the category $\mc{C}$.

\begin{lem}\label{CuspSupp} Every simple object in $\mc{C}$, aside from $\bar{\pi}$, has cuspidal support $n\bar{\mathbbm{1}}$.\end{lem}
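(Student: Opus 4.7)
The plan is to reduce to the classification of cuspidal representations of general linear groups over finite fields via the mod-$\ell$ Deligne--Lusztig theory recalled in the excerpt, and to track (super)cuspidal support through parabolic induction. Let $\bar{\tau}$ be any simple object of $\mc{C}$. By the definition of $\mc{C}$, $\bar{\tau}$ has supercuspidal support $n\bar{\mathbbm{1}}$. I would write its cuspidal support as $(M,\bar{\sigma}_1\boxtimes\cdots\boxtimes\bar{\sigma}_k)$, where $M=\GL_{m_1}(\F_q)\times\cdots\times\GL_{m_k}(\F_q)$ is a standard Levi (with $\sum m_i=n$) and each $\bar{\sigma}_i$ is an irreducible cuspidal representation of $\GL_{m_i}(\F_q)$ over $k$. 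Transitivity of parabolic induction then ensures that the supercuspidal support of $\bar{\tau}$ equals the multiset union of the supercuspidal supports of the individual $\bar{\sigma}_i$.

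The heart of the argument is to show that each $\bar{\sigma}_i$ is either the trivial character $\bar{\mathbbm{1}}$ of $\F_q^\times$ (when $m_i=1$) or the given cuspidal $\bar{\pi}=\bar{\pi}_1$ of $\GL_n(\F_q)$ (when $m_i=n$). Writing $\bar{\sigma}_i=\bar{\pi}_{s_i}$ in the Deligne--Lusztig parametrization, every supercuspidal constituent appearing in the support of $\bar{\sigma}_i$ is the supercuspidal representation attached to a Galois conjugate of $s_i$; in particular its degree over $\F_q$ equals that of $s_i$. Since every supercuspidal constituent of $\bar{\tau}$ must be the trivial character of $\F_q^\times$ (which is attached to $s=1$, of degree $1$), we are forced to have $s_i=1$. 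Next, for $\bar{\pi}_1$ to exist as a cuspidal representation of $\GL_{m_i}(\F_q)$, we need $1$ to be the $\ell$-regular part of some element of $\F_{q^{m_i}}^\times$ of degree $m_i$ over $\F_q$, i.e.\ a nontrivial $\ell$-power-order element of $\F_{q^{m_i}}^\times$ lying in no proper subfield. Since $n$ is the order of $q$ modulo $\ell$, for $1<m_i<n$ we have $\ell\nmid q^{m_i}-1$ and no such element exists, whereas for $m_i=n$ the $\ell$-Sylow of $\F_{q^n}^\times$ is nontrivial and any nontrivial element of it has degree exactly $n$ over $\F_q$ (because $\ell\nmid q^d-1$ for $d\mid n$, $d<n$). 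Thus $m_i\in\{1,n\}$.

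The lemma then follows by case analysis. If some $m_i=n$ then $k=1$ and $M=G$, so the cuspidal support of $\bar{\tau}$ is $\bar{\tau}$ itself, forcing $\bar{\tau}=\bar{\sigma}_1=\bar{\pi}$. Otherwise every $m_i=1$, $M$ is the diagonal torus, and the cuspidal support of $\bar{\tau}$ equals $n\bar{\mathbbm{1}}$, as claimed.

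The step I expect to be the main obstacle is the identification of the supercuspidal support of an arbitrary mod-$\ell$ cuspidal $\bar{\pi}_{s_i}$ in terms of its parameter $s_i$; this is standard material in modular Deligne--Lusztig theory (see e.g.\ \cite{MFV}), but one must cite the correct statement matching the parametrization used in \cite{Helm}. Once that is in hand, the arithmetic reduction to $s_i=1$ and $m_i\in\{1,n\}$, together with the case analysis above, is immediate.
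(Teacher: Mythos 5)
Your proposal is correct and follows essentially the same route as the paper's proof: decompose the cuspidal support into constituents $\bar{\pi}_{s_i}$ of $\GL_{m_i}(\F_q)$, use the supercuspidal-support constraint to force each $s_i=1$, observe that a cuspidal $\bar{\pi}_1$ on $\GL_{m_i}(\F_q)$ requires an $\ell$-power-torsion element of degree $m_i$ so $m_i\in\{1,n\}$, and conclude by the case split. The paper states the argument more tersely, but the underlying chain of reasoning is identical; your writeup just fills in the arithmetic justification for $m_i\in\{1,n\}$ more explicitly.
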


\begin{proof} Let $\bar{\pi}'$ be a simple object in $\mc{C}$.  A representation, $\bar{\pi}_0$, appearing in the cuspidal support for $\bar{\pi}'$ must itself have supercuspidal support equal to $m\mathbbm{1}$, for some $m<n$.  Hence $\bar{\pi}_0$ is the representation of $\GL_m(\F_q)$ coming from 1.  We conclude that an $\ell$-power torsion element generates $\F_{q^m}/\F_q$.  Thus we have $m=n$ or $m=1$.  If any representation in the cuspidal support gives $m=n$, we must have $\bar{\pi}'=\bar{\pi}$.  Otherwise, the cuspidal support of $\bar{\pi}'$ is as claimed.\end{proof}

If $\bar{\pi}_0$ is the parabolic induction of $n\bar{\mathbbm{1}}$, Lemma \ref{CuspSupp} implies that $\bar{\pi}_0$ surjects onto any simple object in $\mc{C}$ other than $\bar{\pi}$.  Hence, if we let $\pi_0$ be the parabolic induction of $n$ copies of the trivial representation over $\W(k)$, we see that $\pi_0$ is projective and surjects onto all the simple objects in $\mc{C}$ other than $\bar{\pi}$.

Putting $P_0=e_{\bar{\pi}}\pi_0$, we conclude that $Q_{\bar{\pi}}=P_0\oplus P_{\bar{\pi}}$ is a faithfully projective object in $\mc{C}$.  Since $Q_{\bar{\pi}}$ is constructed as a direct product, its  $\W(k)[G]$-endomorphism ring can be represented as the collection of block matrices $$\left(\begin{matrix}\End(P_0) & \Hom (P_{\bar{\pi}},P_0)\\ \Hom(P_0,P_{\bar{\pi}}) & \End(P_{\bar{\pi}})\end{matrix}\right).$$ In particular we get an embedding $$Z(\End(Q_{\bar{\pi}}))\hookrightarrow Z(\End(P_0))\times Z(\End(P_{\bar{\pi}})).$$

The relevant point to all of this is the following proposition:

\begin{pro} The map $Z(\End_{\W(k)[G]}(Q_{\bar{\pi}}))\to Z(\End_{\W(k)[G]}(P_{\bar{\pi}}))$ that sends an endomorphism to its action on $P_{\bar{\pi}}$ is surjective.\end{pro}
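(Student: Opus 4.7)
The plan is to reinterpret the statement via Roche's theorem (just stated) in terms of the Bernstein center, construct an explicit preimage via Schur's lemma in characteristic zero, and then verify integrality.

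By Roche's theorem, $Z(\End_{\W(k)[G]}(Q_{\bar{\pi}}))$ is identified with the Bernstein center $e_{\bar{\pi}} Z(\W(k)[G])$ of $\mc{C}$, and the map in question becomes the natural action of this Bernstein center on $P_{\bar{\pi}}$. Writing a central element as a pair $(a, d) \in \End(P_0) \oplus \End(P_{\bar{\pi}})$ in the block form of $\End(Q_{\bar{\pi}})$, centrality is equivalent to $a \in Z(\End(P_0))$ together with the cross-compatibilities $af = fd$ for every $f \in \Hom(P_{\bar{\pi}}, P_0)$ and $dg = ga$ for every $g \in \Hom(P_0, P_{\bar{\pi}})$; given $d$, one must construct such an $a$. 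I would proceed by passing to $L$: Proposition \ref{CharZeroProjEnv} gives that $P_{\bar{\pi}} \otimes L$ is a multiplicity-free sum of irreducibles, so by Schur's lemma $d$ acts on each summand $\pi$ as a scalar $\lambda_{\pi}$. Letting $e_{\pi} \in L[G]$ denote the primitive central idempotent for $\pi$, I would set
$$\tilde{z} = \sum_{\pi} \lambda_{\pi} e_{\pi} \in Z(L[G]),$$
the sum running over summands $\pi$ of $P_{\bar{\pi}} \otimes L$. This $\tilde{z}$ lies in $e_{\bar{\pi}} Z(L[G])$, is central in $\End(Q_{\bar{\pi}} \otimes L)$, restricts to $d \otimes L$ on $P_{\bar{\pi}} \otimes L$, and acts as zero on the remaining isotypic components of $P_0 \otimes L$.

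The main obstacle will be to show that $\tilde{z}$ is in fact integral, i.e., lies in $e_{\bar{\pi}} Z(\W(k)[G])$ rather than merely $e_{\bar{\pi}} Z(L[G])$. I would combine the integrality of the block idempotent $e_{\bar{\pi}}$ (established in the earlier proposition) with the integrality of $d$ and check directly that $\tilde{z}$ preserves the $\W(k)$-lattice on each indecomposable projective summand of $Q_{\bar{\pi}}$. The delicate case is the Steinberg isotypic component, which is shared between $P_{\bar{\pi}}$ and various indecomposable summands of $P_0$; this forces $\lambda_{\St}$ to act integrally across all such summands, and verifying this is the main subtlety. Should the naive choice of $\tilde{z}$ fail to be integral, I would exploit the freedom to assign arbitrary scalars to the irreducibles of $\mc{C} \otimes L$ not appearing in $P_{\bar{\pi}} \otimes L$ (these adjustments do not change the restriction to $P_{\bar{\pi}}$), which should suffice to enforce integrality.
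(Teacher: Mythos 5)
Your framing via Roche and the block decomposition of $\End(Q_{\bar{\pi}})$ is the right one, and you correctly see that the whole problem is to fill in the $\End(P_0)$ block compatibly. But the specific element $\tilde{z}=\sum_{\pi}\lambda_\pi e_\pi$ (sum over constituents of $P_{\bar{\pi}}\otimes L$) is not the right candidate, and the gap it creates is exactly the one you flag as a ``subtlety'' without resolving. The trouble is that $\tilde{z}$ acts on $P_0\otimes L$ by $\lambda_{\St}$ on the Steinberg isotypic component and by $0$ on every other constituent. That means $\tilde{z}$ restricted to $P_0\otimes L$ is $\lambda_{\St}\cdot e_{\St}$, i.e.\ $\lambda_{\St}$ times the projection onto the Steinberg summand. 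But the idempotent $e_{\St}$ does not preserve the lattice $P_0$: the indecomposable projective summand of $P_0$ whose extension to $L$ contains $\St_L$ has several irreducible constituents over $L$, so the Steinberg projector cannot come from an integral endomorphism of $P_0$ (otherwise that indecomposable summand would split off a direct summand after inverting $\ell$, which projectives don't do). So the ``naive'' $\tilde{z}$ fails integrality for structural reasons, not by accident, and adjusting scalars really is forced.

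The specific adjustment that works, and the one the paper uses, is to put the \emph{same} scalar $\lambda_{\St}$ on \emph{every} constituent of $P_0\otimes L$, i.e.\ to take the $P_0$-block to be $\lambda_{\St}\cdot\id_{P_0}$. The cross compatibility $af=fd$ for $f\in\Hom(P_{\bar{\pi}},P_0)$ then follows because any such $f$, after inverting $\ell$, factors through the Steinberg quotient of $P_{\bar{\pi}}\otimes K$ (the Steinberg representation is the only irreducible constituent $P_{\bar{\pi}}\otimes K$ and $P_0\otimes K$ share), and $d$ acts on that quotient by $\lambda_{\St}$; similarly for $g\in\Hom(P_0,P_{\bar{\pi}})$. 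Once you have this, integrality of the $P_0$-block reduces to the single claim $\lambda_{\St}\in\W(k)$, which is the real content and which your proposal never proves. The paper's argument is a lattice argument: $\St_K$ is defined over $K$, the image $M$ of $P_{\bar{\pi}}$ in $\St_K$ is a $G$-stable $\W(k)$-lattice, and $d$ preserves $M$ because it is a $\W(k)[G]$-endomorphism of $P_{\bar{\pi}}$; a $K$-scalar preserving a $\W(k)$-lattice lies in $\W(k)$. This is the one genuinely new idea needed, and it is missing from your write-up. With that lemma in hand and the $P_0$-block replaced by the scalar $\lambda_{\St}$, the rest of your argument goes through and agrees with the paper's.
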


\begin{proof} The key point is that $P_{\bar{\pi}}\otimes L$ and $P_0\otimes L$ each contain a single copy of $\St_L$ and that this representation is the only irreducible representation they have in common.  Indeed, $P_0\otimes L$ is a parabolic induction and so, of the representations named in Proposition \ref{CharZeroProjEnv}, only $\St_L$ can admit a nonzero map to or from $P_0\otimes L$.  Furthermore, $P_0\otimes L$ is equal to the parabolic induction of $n\mathbbm{1}$ and so it contains a single copy of $\St_L$.

In particular, $P_{\bar{\pi}}\otimes K$ and $P_0\otimes K$ have only one irreducible $K[G]$-representation, $\St_K$, in common, each representation contains a unique copy of $\St_K$, and $\St_K$ satisfies $$\St_L=\St_K\otimes_KL.$$ Let $f$ be the canonical surjection $P_{\bar{\pi}}\otimes_{\W(k)}K\to\St_K$.  We denote by $M$ the image of $P_{\bar{\pi}}$ in $\St_K$ so that we have the commutative diagram $$\xymatrix{P_{\bar{\pi}}\otimes_{\W(k)}K\ar[rr]^f & &  \St_K \\ P_{\bar{\pi}}\ar@{^{(}->}[u]\ar[rr] & & M\ar@{^{(}->}[u] }$$ and so that $M$ is a $G$-stable $\W(k)$-lattice in $\St_K$.

We claim that $g\in\End_{\W(k)[G]}(P_{\bar{\pi}})$ acts on $M$ by a scalar in $\W(k)$.   Indeed, induces a map on $\St_K$ via restriction.  Since $\St_K$ is absolutely irreducible, this map must be multiplication by a scalar in $K$.  Moreover, $g$ preserves $M$ and so $g$ is a scalar in $K$ which preserves the $\W(k)$-lattice $M$.  We conclude that $g$ is a scalar in $\W(k)$ as claimed.

We thus obtain a map $s:\End_{\W(k)[G]}(P_{\bar{\pi}})\to\W(k)$ which takes a map to the scalar by which it acts on $M$.  Certainly then, for $g\in Z(\End_{\W(k)[G]}(P_{\bar{\pi}}))$, we may view $s(g)$ as an element of $\End_{\W(k)[G]}(P_0)$ and consider the element $$\Phi(g)=\left(\begin{matrix}s(g) & 0 \\ 0 & g \end{matrix}\right)\in\End_{\W(k)[G]}(Q_{\bar{\pi}}).$$

We claim that $\Phi(g)\in Z(\End_{\W(k)}(Q_{\bar{\pi}}))$.  Indeed, for $\psi\in\Hom_{\W(k)[G]}(P_{\bar{\pi}},P_0)$, to say that $\Phi(g)$ commutes with $$\left(\begin{matrix} 0 & \psi \\ 0 & 0 \end{matrix}\right)$$ is to say that for $x\in P_{\bar{\pi}}$, we have $$\psi(g(x))=s(g)\psi(x)$$ in $P_0$. But now since $P_0$ and $P_{\bar{\pi}}$ embed into $P_0\otimes_{\W(k)}K$ and $P_{\bar{\pi}}\otimes_{\W(k)}K$, respectively, it suffices to check this equality over $K$.  Since $P_0\otimes_{\W(k)}K$ and $P_{\bar{\pi}}\otimes_{\W(k)}K$ have only the factor $\St_K$, we have  $\psi=\psi\circ f$.  Since $g$ commutes with $f$ and since $g$ acts by $s(g)$ on $\St_k$, we conclude that $\psi(g(x))=\psi(s(g)x)$ and the equality sought holds trivially.

Likewise, for $\psi\in\Hom_{\W(k)[G]}(P_0,P_{\bar{\pi}})$ the commutativity of $\Phi(g)$ and $$\left(\begin{matrix} 0 & 0 \\ \psi & 0 \end{matrix}\right)$$ is trivial.  Finally, $\Phi(g)$ commutes with a diagonal endomorphism because $\End_{\W(k)[G]}(P_{\bar{\pi}})$ is commutative. Certainly $\Phi(g)$ maps to $g$.\end{proof}

Since we have already given an isomorphism $$e_{\bar{\pi}}Z(\W(k)[G])\to \End_{\W(k)[G]}(Q_{\bar{\pi}}),$$ we conclude that we have maps $$e_{\bar{\pi}}Z(\W(k)[G])\twoheadrightarrow\End_{\W(k)[G]}(P_{\bar{\pi}})\hookrightarrow \End_{L[G]}(P_{\bar{\pi}}\otimes_{\W(k)}L).$$  Denoting this composition by $\delta$, we see that $\End_{\W(k)[G]}(P_{\bar{\pi}})$ is isomorphic to the image of $\delta$.  We have thus reduced our problem to a computation of this image.

\section{Invariants in a Cyclotomic Algebra}

\label{S:Alg} We continue to assume that $n$ is the order of $q$ modulo $\ell$ and we denote by $r$ be the $\ell$-valuation of $q^n-1$.  In this section, we will compute the space of invariants mentioned in Theorem \ref{Main}.  To be precise, for the remainder of the section, we put $$R=\W(k)[X]/(X^{\ell^r}-1)~\textrm{and}~f(X)=X+X^q+\cdots+X^{q^{n-1}}.$$ Then we have a map $$\W(k)[Y]\to R$$ induced by $Y\mapsto f(X)$ and the image of this map is fixed under the transformation $h(X)\mapsto h(X^q)$.  The aim of this section is to show the converse.

We begin by working over certain $\ell$-power cyclotomic extensions of $K$ (which will have an obvious relation to $R$).  Explicitly, for each $0<i\leq r$, fix a primitive $\ell^i$th root unity, $\zeta_i$.  Then we may identify $(\Z/\ell^i\Z)^\times$ with the Galois group of $K(\zeta_i)/K$ in the usual way.  Our first aim is to understand the fixed field, $L_i$, of the subgroup generated of $\Gal(K(\zeta_i)/K)$ generated by $q$. To this end, put $$T=\Z[X]/\left<X^{q^n-1}-1\right>.$$ As above, $a\in\Z$ gives rise to the endomorphism $h(X)\mapsto h(X^a)$ of $T$.  In this way, we get an action of $(\Z/(q^n-1)\Z)^\times$ on $T$.  In particular, the cyclic subgroup generated by $q$ acts on $T$.

\begin{lem}\label{OrbitOrder}  Consider an element of the form $X^a$ in $T$.  If the orbit of $X^a$ under multiplication by $q$ has order strictly less than $n$, then $a$ is divisible by $\ell^r$.\end{lem}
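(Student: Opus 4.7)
The plan is to translate the orbit-size condition into a divisibility relation on the exponent $a$, and then read off the $\ell$-adic valuation using that $n$ is the multiplicative order of $q$ modulo $\ell$.

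First I would unpack the hypothesis: if the orbit of $X^a$ has size $m < n$, then $X^{aq^m} = X^a$ in $T$, so $(q^n-1) \mid a(q^m-1)$. Set $g = \gcd(m,n)$, which is a proper divisor of $n$ since $g \le m < n$. Using the standard cyclotomic identity $\gcd(q^m - 1, q^n - 1) = q^g - 1$, I can write $A = (q^n-1)/(q^g-1)$ and $B = (q^m-1)/(q^g-1)$ and observe $\gcd(A, B) = 1$ (because $(q^g-1) \cdot \gcd(A,B) = \gcd(A(q^g-1), B(q^g-1)) = q^g - 1$). The divisibility $A(q^g-1) \mid a \cdot B(q^g-1)$ then forces $A \mid a$, i.e.,
$$\frac{q^n-1}{q^g-1} \;\Big|\; a.$$

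Second, I would compute $v_\ell(A)$. Since $n$ is by hypothesis the multiplicative order of $q$ modulo $\ell$ and $g < n$ (properly), $q^g \not\equiv 1 \pmod{\ell}$, so $v_\ell(q^g - 1) = 0$. Therefore
$$v_\ell(A) \;=\; v_\ell(q^n - 1) - v_\ell(q^g - 1) \;=\; r - 0 \;=\; r,$$
which gives $\ell^r \mid A \mid a$, as required.

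The argument is essentially a routine manipulation of cyclotomic integers; the only non-trivial ingredient is the coprimality $\gcd(A,B)=1$, but this follows immediately from the gcd identity above. There is no real obstacle, which is reassuring — the subtlety of the whole picture is encoded into the choice of $r$ and the fact that $n$ is the exact order of $q$ modulo $\ell$, and the lemma is simply cashing that in.
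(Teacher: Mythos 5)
Your proof is correct and takes essentially the same route as the paper: both begin from the same relation $X^{aq^m}=X^a$ in $T$, which gives $(q^n-1)\mid a(q^m-1)$, and both close by exploiting that $n$ is the exact order of $q$ modulo $\ell$. The only difference is in the middle step. The paper passes quickly through the cyclotomic polynomial, asserting that $b<n$ forces $\Phi_n(q)\mid a$ (so in particular $\ell^r\mid a$, since $\ell>n$ guarantees $v_\ell(\Phi_n(q))=r$). You instead split $q^n-1$ and $q^m-1$ by their gcd $q^g-1$ with $g=\gcd(m,n)<n$, establish coprimality of the cofactors, and deduce $A=(q^n-1)/(q^g-1)\mid a$, then read off $v_\ell(A)=r$. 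Your version is a bit longer but avoids the slightly delicate claim $\Phi_n(q)\mid a$ (which implicitly needs $\gcd(\Phi_n(q),q^b-1)$ to be handled); on the other hand, both arguments are more elaborate than strictly necessary, since one can simply apply $v_\ell$ to $(q^n-1)\mid a(q^m-1)$ and use $v_\ell(q^m-1)=0$ to conclude $r\le v_\ell(a)$ directly.
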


\begin{proof} Suppose the orbit in question has order $b$.  Then $$X^{q^ba}-X^a=X^a(X^{a(q^b-1)}-1)$$ is divisible by $X^{q^n-1}-1$ so that $q^n-1$ divides $a(q^b-1)$.  In particular, since $b<n$, $a$ is divisible by $\Phi_n(q)$ ($\Phi_n(X)$ being the $n$th cyclotomic polynomial) and so by $\ell^r$.\end{proof}

Next, consider the element $$N(X)=(X-1)(X^q-1)\cdots(X^{q^{n-1}}-1)$$ in $T$.  We remark that $N(\zeta_i)$ is the norm of $\zeta_i-1$ along the extension $K(\zeta_i)/L_i$ and so it is a uniformizer for the ring of integers of $L_i$.  Putting $$\omega_i=\zeta_i+\zeta_i^q+\cdots+\zeta_i^{q^{n-1}},$$ we have the following:

\begin{pro}\label{UnityQInv}  Suppose that $0\leq i\leq r$.  Then $L_i=K(\omega_i)$ and the ring of integers of $L_i$ is $\W(k)[\omega_i]$.  Moreover, $\omega_i-n$ is a uniformizer for $\W(k)[\omega_i]$.\end{pro}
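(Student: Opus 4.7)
The plan is to reduce all three assertions to a single fact: that $\omega_i - n$ is a uniformizer of the local ring $\mathcal{O}_{L_i}$ (for $i\geq 1$; the case $i=0$ is trivial, since $\omega_0 = n$ and $L_0 = K$). Galois-theoretically, $\omega_i$ is manifestly fixed by the subgroup $\langle q\rangle \subseteq (\Z/\ell^i\Z)^\times = \Gal(K(\zeta_i)/K)$, so $\omega_i \in L_i$ and $K(\omega_i) \subseteq L_i$. Because $K$ is unramified over $\Q_\ell$, the extension $K(\zeta_i)/K$ is totally ramified of degree $\phi(\ell^i) = (\ell-1)\ell^{i-1}$; since $i \leq r$, the order of $q$ modulo $\ell^i$ is exactly $n$, so $[L_i : K] = \phi(\ell^i)/n$ and both $L_i/K$ and $K(\zeta_i)/L_i$ are totally ramified, the latter tame (as $n<\ell$). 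Once $\omega_i - n$ is shown to be a uniformizer of $L_i$, standard Eisenstein theory yields $\mathcal{O}_{L_i} = \W(k)[\omega_i - n] = \W(k)[\omega_i]$ and $L_i = K(\omega_i)$.

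To prove the uniformizer claim, it suffices to compute $v_{K(\zeta_i)}(\omega_i - n) = [K(\zeta_i):L_i] = n$. Put $x = \zeta_i - 1$, a uniformizer of $K(\zeta_i)$. Expanding with the binomial theorem gives
$$ \omega_i - n \;=\; \sum_{j=0}^{n-1}\bigl((1+x)^{q^j} - 1\bigr) \;=\; \sum_{k \geq 1} c_k\, x^k, \qquad c_k = \sum_{j=0}^{n-1} \binom{q^j}{k}, $$
and the Stirling numbers of the first kind (defined by $\binom{Y}{k} = \tfrac{1}{k!}\sum_{m=1}^{k}s(k,m)Y^m$) repackage this as $c_k = \frac{1}{k!}\sum_{m=1}^{k} s(k,m)\,S_m$, where $S_m = \sum_{j=0}^{n-1} q^{jm} = (q^{nm}-1)/(q^m-1)$. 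By lifting-the-exponent (valid since $\ell$ is odd), for $1 \leq m < n$ one has $v_\ell(q^m-1)=0$ and $v_\ell(q^{nm}-1) = r + v_\ell(m) = r$, so $v_\ell(S_m) = r \geq 1$; the same input gives $v_\ell(S_n) = 0$. Since $k \leq n < \ell$, the factor $1/k!$ is an $\ell$-adic unit, and since $s(k,m) = 0$ for $m > k$, this yields $v_\ell(c_k) \geq r \geq 1$ for $k < n$, while $c_n \equiv S_n/n! \not\equiv 0 \pmod{\ell}$. The contribution of $c_k x^k$ to the valuation is $\phi(\ell^i)\,v_\ell(c_k) + k$; since $\phi(\ell^i) \geq \ell - 1 > n$, this exceeds $n$ for $k < n$, equals $n$ for $k=n$, and exceeds $n$ for $k>n$. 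Hence $v_{K(\zeta_i)}(\omega_i - n) = n$, as required.

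The delicate part is the isolation of $c_n$ as the unique coefficient that is a unit modulo $\ell$. Three inputs must conspire: the invertibility of each $k!$ in $\W(k)$; the vanishing $s(k,m) = 0$ for $m > k$, which kills the (non-vanishing) $S_n$ term from $c_k$ whenever $k<n$; and the bound $\phi(\ell^i) \geq \ell - 1 > n$, which pushes every lower-order $c_k x^k$ with $k<n$ strictly past valuation $n$ in $K(\zeta_i)$. All three ultimately rest on the standing assumption $\ell > n$; without it, the tame-ramification picture for $K(\zeta_i)/L_i$ collapses and this coefficient analysis fails.
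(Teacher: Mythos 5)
Your proof is correct, but it takes a genuinely different route from the paper's. The paper works with the norm element $N(X)=(X-1)(X^q-1)\cdots(X^{q^{n-1}}-1)$: since $N(\zeta_i)$ is the norm of the uniformizer $\zeta_i-1$ along the totally ramified extension $K(\zeta_i)/L_i$, it is automatically a uniformizer of $L_i$. The paper then groups the monomials of $N(X)$ into $q$-orbits, uses $N(1)=0$ to rewrite $N(\zeta_i)$ as a sum of terms of the shape $\sum_{a\in\mathcal{O}}\zeta_i^a - |\mathcal{O}|$, invokes Lemma~\ref{OrbitOrder} to discard orbits of size $<n$, observes that each remaining term has positive valuation, and concludes that at least one such term --- a Galois conjugate of $\omega_i - n$ --- must itself be a uniformizer. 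Your approach instead expands $\omega_i - n = \sum_{j=0}^{n-1}\bigl((1+x)^{q^j}-1\bigr)$ directly in powers of $x=\zeta_i-1$ and controls the coefficients $c_k=\sum_j\binom{q^j}{k}$ via Stirling numbers of the first kind and lifting-the-exponent, isolating $k=n$ as the unique coefficient that is an $\ell$-adic unit; this gives $v_{K(\zeta_i)}(\omega_i-n)=n$ by a pure valuation count with no cancellation concerns, since the minimum is attained at exactly one $k$. The paper's argument is shorter and leans on the elegant observation that the norm is a ready-made uniformizer, at the cost of a slightly delicate ``some term in the sum must achieve the minimal valuation'' step; yours is more computational but makes the dependence on $\ell>n$ (invertibility of $k!$, the bound $\phi(\ell^i)\geq\ell-1\geq n$, tameness of $K(\zeta_i)/L_i$) fully explicit, and it also spells out the Eisenstein argument that recovers $L_i=K(\omega_i)$ and $\mathcal{O}_{L_i}=\W(k)[\omega_i]$, which the paper leaves implicit. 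One tiny slip: you write $\phi(\ell^i)\geq\ell-1>n$, but $\ell>n$ only gives $\ell-1\geq n$ (equality when $q$ is a primitive root mod $\ell$); the conclusion that the $k<n$ terms have valuation $>n$ is still correct, because the extra $+k\geq 1$ already pushes $\phi(\ell^i)v_\ell(c_k)+k$ past $n$.
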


\begin{proof} Put $\zeta=\zeta_i$, $\omega=\omega_i$, and $L=L_i$.  In addition, let $S$ be the collection of orbits of powers of $X$ appearing in the expansion of $N(X)$.  We claim that $N(X)$ may be written in the form $$N(X)=\sum_{\O\in S}\left[(-1)^\O\sum_{X^a\in\O}X^a\right],$$ where for each $\O\in S$, $(-1)^\O$ has the value $\pm1$.

To show this claim, we first assume that $q\neq2$.  We see that all the powers of $X$ appearing in the expansion of $N(X)$ are distinct (looking for example at $q$-adic expansions).  Moreover, the degree of $N(X)$ is less than $q^n-1$, and so all of these powers of $X$ are $\Z$-linearly independent in $T$. Since $N(X)$ is preserved under the action of $q$, the sign of any two monomials in an orbit is the same. In the case $q=2$, the argument applies to all of the terms except for the first and the last.  But those terms are each fixed by the action of $q$ and so the claim is trivial.

In particular, $N(1)=\sum_{\O\in S}(-1)^\O|\O|$, but $N(1)=0$.  Thus we have  $$N(\zeta)=\sum_{\O\in S}\left[(-1)^{\O}\left(\sum_{X^a\in\O}\zeta^a-|\O|\right)\right]$$ and so the latter sum is a uniformizer for $L_i$.  Moreover, if $\O\in S$ does not have order $w$, Lemma \ref{OrbitOrder} implies that for any $X^a\in\O$, $\ell^r$ divides $a$.  Hence $\zeta^a$ is 1 and so the corresponding term is zero.  Thus if $S'$ is the collection of orbits of order $n$, we obtain $$N(\zeta)=\sum_{\O\in S'}\left[(-1)^{\O}\left(\sum_{X^a\in\O}\zeta^a-n\right)\right].$$

Since $\zeta$ is an $\ell$th power root of unity, $\zeta$ reduces to 1 in the residue field of $K(\zeta_i)$ and so each term in the sum indexed by $S'$ reduces to 0.  Hence the sum over $S'$ is a sum of elements of positive valuation which add to a uniformizer of $L$. We conclude that at least one of the terms of this sum is a uniformizer for $L$.  That is, we have an $a$ so that $$\zeta^a+\zeta^{aq}+\cdots+\zeta^{aq^{n-1}}-n$$ is a uniformizer for $L$.   This $a$ must be prime to $\ell$ as otherwise the uniformizer will lie in a small extension of $K$.

Thus $a$ is prime to $\ell$.  But the Galois automorphism $\zeta\mapsto\zeta^a$ takes $\omega-n$ to $$\zeta^a+\zeta^{aq}+\cdots+\zeta^{aq^{n-1}}-n.$$ In particular, $\omega-n$ is a uniformizer and so it generates $L$.\end{proof}

\begin{cor}\label{PullBackModL} Over $k$, the polynomial $$g(X)=X+X^q+\cdots+X^{q^{n-1}}-n$$ is divisible by $(X-1)^n$, but not $(X-1)^{n+1}$.\end{cor}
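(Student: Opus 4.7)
The plan is to read off the multiplicity of $(X-1)$ in $\bar g$ from Proposition \ref{UnityQInv}. That proposition says $g(\zeta_r) = \omega_r - n$ is a uniformizer of $L_r$; combined with the total ramification of $K(\zeta_r)/L_r$ of degree $n$, this gives $v_{K(\zeta_r)}(g(\zeta_r)) = n$, where I normalize the valuation so that $\pi = \zeta_r - 1$ has valuation one.

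Let $m$ denote the order of vanishing of $\bar g$ at $X = 1$, and factor $\bar g = (X-1)^m \bar h$ with $\bar h(1) \neq 0$ in $k$. Lifting $\bar h$ to some $h \in \W(k)[X]$, I can write $g(X) = (X-1)^m h(X) + \ell u(X)$ for some $u \in \W(k)[X]$. Evaluating at $\zeta_r$ gives $g(\zeta_r) = \pi^m h(\zeta_r) + \ell u(\zeta_r)$: the first summand has valuation exactly $m$ since $h(\zeta_r)$ reduces to the unit $\bar h(1)$ modulo $\pi$, while the second has valuation at least $v(\ell) = \ell^{r-1}(\ell-1) \geq \ell - 1 \geq n$. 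Comparing to $v(g(\zeta_r)) = n$ forces $m = n$ whenever the last inequality is strict, i.e., in every case except the borderline $r = 1$, $n = \ell - 1$, where $v(\ell) = n$ itself.

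To dispose of this edge case I would iterate one step. Factor $g(X) = (X-1) g_1(X)$ in $\W(k)[X]$, so $v(g_1(\zeta_1)) = n - 1$, and write $g_1(X) = (X-1)^{m-1} h(X) + \ell u_1(X)$. Evaluating at $X = 1$ identifies $u_1(1) = g_1(1)/\ell = (q^n-1)/(\ell(q-1))$, which in the edge case $r = 1$ has $\ell$-adic valuation $0$ and is therefore a unit in $\W(k)$. Consequently $v(\ell u_1(\zeta_1)) = n$ exactly, and comparing with $v(g_1(\zeta_1)) = n - 1$ rules out $m \geq n+1$. The main obstacle is precisely this edge case, where $v(\pi^m h(\zeta_r))$ and $v(\ell u(\zeta_r))$ can coincide so that the single evaluation at $\zeta_r$ cannot distinguish $m = n$ from $m > n$; the iterated comparison exploits the $r = 1$ hypothesis to force the error term at the next level to be a unit, thereby breaking the tie.
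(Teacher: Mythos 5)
Your proof is correct. In the generic case it runs on exactly the same rails as the paper's: write $g = (X-1)^m h + \ell u$ over $\W(k)[X]$, evaluate at $\zeta_r$, and read off $m$ from the equality $v_{K(\zeta_r)}(g(\zeta_r)) = n$ furnished by Proposition~\ref{UnityQInv} together with $v(\ell) = \phi(\ell^r) \geq n$. Where you genuinely diverge is in the treatment of the tied case $r=1$, $n=\ell-1$, where $v(\ell) = n$ exactly. The paper there abandons the valuation picture entirely and verifies by brute force that $g^{(n+1)}(1) \not\equiv 0$ in $k$, via a somewhat delicate combinatorial argument about which powers of $q$ survive modulo $\ell$. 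You instead stay inside the valuation framework and iterate once: since $g(1) = 0$, you may factor out $(X-1)$ over $\W(k)$ to get $g_1$ with $g_1(1) = (q^n-1)/(q-1)$, a quantity of $\ell$-adic valuation exactly $r$; in the edge case $r=1$ this makes $u_1(1) = g_1(1)/\ell$ a unit, which pins down $v(\ell u_1(\zeta_1)) = n$ exactly and kills $m \geq n+1$ against $v(g_1(\zeta_1)) = n - 1$. This buys you a more uniform argument that avoids the explicit $(n+1)$st derivative computation, and it exposes the structural source of the tie-break (the precise $\ell$-valuation of $(q^n-1)/(q-1)$) rather than hiding it inside a case-by-case expansion. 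Incidentally, both your argument and the paper's can be shortened further: the relation $u(1)=0$ already forces $v(u(\zeta_r)) \geq 1$, so $v(\ell u(\zeta_r)) \geq \phi(\ell^r) + 1 > n$ without any case split; but your iterated version is a legitimate, correct alternative to the paper's derivative computation.
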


\begin{proof} Let $i$ be the power of $(X-1)$ in the factorization of $g(X)$ modulo $\ell$.  We work over $\W(k)[X]$ and write \begin{equation}\label{ModPoly} g(X)=(X-1)^ig_0(X)+\ell h(X),\end{equation} where $g_0,h\in\W(k)[X]$ and $g_0(1)\neq1$ in $k$.  Put $\zeta=\zeta_r$ and let $\nu$ be the normalized valuation on $K[\zeta]$.  Then $\nu(g_0(\zeta))=0$ and, from Proposition \ref{UnityQInv}, $\nu(g(\zeta))=n$.  On the other hand, $\nu(\ell)=\phi(\ell^r)\geq n$.  Thus \ref{ModPoly} implies that $\nu[(\zeta-1)^i]\geq n$ so that $i\geq n$.

Suppose, on the other hand, that $i>n$.  After evaluation at $\zeta$ the image of the right-hand side of \ref{ModPoly} under $\nu$ is at least $$\min\{i,\ell^{r-1}(\ell-1)\}.$$  Under the assumption that we do not have $r=1$ and $n=\ell-1$, this integer is larger than $n$, a contradiction.

In the case that $n=\ell-1$, we show directly that, over $k$, $g^{(n+1)}(1)\neq0$.  To this end, we have $$g^{(n+1)}(1)=q^{n-1}(q^{n-1}-1)\cdots(q^{n-1}-(n-1))+\cdots+q(q-1)\cdots(q-(n-1)).$$ Since $n$ is even in this case, we may put $n=n'/2$ and get $q^{n'}\equiv-1\mod{\ell}$.  Furthermore, no other power of $q$ between $0$ and $n-1$ is equivalent to $-1$.  In other words, all these powers of $q$ are equivalent to one of $$\{1,2,\ldots,\ell-2\}=\{1,\ldots,n-1\}.$$  Thus the only product in our expansion for $g^{(n+1)}(1)$ which survives is $$q^{n'}\cdots(q^{n'}-(n-1))$$ and it is the product of all the elements of $(\Z/\ell\Z)^\times$, which is nonzero in $k$.\end{proof}

We also denote the minimal polynomial of $\omega_i$ over $\W(k)$ by $m_i(X)$ and we put $m(X)=(X-n)\prod_{i=1}^rm_i(X)$.

\begin{lem}\label{IndividPullBack} Consider the map $\W(k)[Y]\to R$ given by $Y\mapsto f(X)$.  The pre-image of the ideal of $R$ generated by $\phi_{\ell^i}(X)$ is $m_i(Y)$.\end{lem}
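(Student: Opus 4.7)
The plan is to identify the preimage in question with the kernel of a certain evaluation map, and then to recognize that kernel as $(m_i(Y))$ by standard minimal polynomial theory.

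First, since $\Phi_{\ell^i}(X)$ divides $X^{\ell^r}-1$ in $\W(k)[X]$, the quotient $R/(\Phi_{\ell^i}(X))$ coincides with $\W(k)[X]/(\Phi_{\ell^i}(X))$. So the preimage I want is exactly the kernel of the composite $\W(k)[Y]\to R\to\W(k)[X]/(\Phi_{\ell^i}(X))$. Because $K$ is unramified over $\Q_\ell$ while $\Q_\ell(\zeta_i)/\Q_\ell$ is totally ramified of degree $\deg\Phi_{\ell^i}$, the cyclotomic polynomial $\Phi_{\ell^i}$ remains irreducible over $\W(k)$, and $\W(k)[X]/(\Phi_{\ell^i}(X))$ is naturally identified with the ring of integers $\W(k)[\zeta_i]$ of $K(\zeta_i)$. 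Under this identification the composite sends $Y\mapsto f(X)\mapsto f(\zeta_i)=\omega_i$.

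Second, the preimage is then the annihilator of $\omega_i$ in $\W(k)[Y]$, and I would show it is $(m_i(Y))$. Since $\omega_i$ is integral over the integrally closed ring $\W(k)$, its minimal polynomial $m_i$ over $K$ lies in $\W(k)[Y]$ and is monic. Any $g\in\W(k)[Y]$ with $g(\omega_i)=0$ is divisible by $m_i$ in $K[Y]$ by minimality; dividing $g$ by the monic polynomial $m_i$ via the polynomial division algorithm keeps the quotient and remainder in $\W(k)[Y]$, so the divisibility descends to $\W(k)[Y]$.

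The argument is essentially formal. The main technical ingredient is the irreducibility of $\Phi_{\ell^i}$ over $\W(k)$---a standard fact which nonetheless underlies the clean description of $R/(\Phi_{\ell^i}(X))$ as the ring of integers $\W(k)[\zeta_i]$. The conceptual point worth keeping in mind is that the image of the evaluation map lands in the proper subring $\W(k)[\omega_i]\subseteq\W(k)[\zeta_i]$ (with strict containment once $i\geq 1$ and $n>1$), so the relevant minimal polynomial is that of $\omega_i$ (namely $m_i$), not that of $\zeta_i$ (namely $\Phi_{\ell^i}$).
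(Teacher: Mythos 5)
Your proposal is correct and follows essentially the same route as the paper's proof: both reduce the preimage computation to the kernel of the composite $\W(k)[Y]\to R\to\W(k)[\zeta_i]$, which sends $Y\mapsto\omega_i$, and then identify that kernel as $(m_i(Y))$. You spell out two points the paper leaves implicit, namely why $\Phi_{\ell^i}$ stays irreducible over $\W(k)$ (so that $R/(\Phi_{\ell^i})\cong\W(k)[\zeta_i]$) and why the kernel of evaluation at $\omega_i$ is exactly the principal ideal generated by the monic minimal polynomial, but the underlying argument is the one the paper uses.
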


\begin{proof} We have the commutative diagram: $$\xymatrix{\W(k)[Y]\ar[r]\ar[dr] & R\ar[d]\\ & \W(k)[\zeta_i]}$$ where $X$ maps to $\zeta_i$ and $Y$ maps to $\omega_i$.  Since $\Phi_{\ell^i}(X)$ generates the kernel of $R\to\W(k)[\zeta_i]$ and $m_i(Y)$ generates that of $\W(k)[Y]\to\W(k)[\zeta_i]$, the claim follows.\end{proof}

We conclude that $m(Y)$ generates the kernel of $\W(k)[Y]\to R$.  Thus, putting $$S=\W(k)[Y]/(m(Y)),$$ we get the injective map $S\to R$ given by $Y\mapsto f(X)$.  We are now in a position to complete the goal of this section:

\begin{thm}\label{qInvCylAlg} The subalgebra of $R$ invariant under $X\mapsto X^q$ is the $\W(k)$-subalgebra generated by $$f(X)=X+X^q+\cdots+X^{q^{n-1}}$$ and so is isomorphic to $$\W(k)[Y]/m(Y).$$\end{thm}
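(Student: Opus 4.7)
The plan is to establish that the injection $S \hookrightarrow R^q$ coming from $Y \mapsto f(X)$ is surjective, by reducing modulo $\ell$ and applying Corollary \ref{PullBackModL}.

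First I would verify that $S$ and $R^q$ are free $\W(k)$-modules of the same rank $N := 1 + (\ell^r - 1)/n$. For $R^q$ this is an orbit count: the action of $\langle q \rangle$ on $\Z/\ell^r\Z$ by multiplication fixes $\{0\}$, and every nonzero orbit has size $n$, since the order of $q$ modulo $\ell^i$ is $n$ for each $1 \leq i \leq r$ (it divides $n$ because $\ell^i \mid q^n - 1$, and it is a multiple of $\ord_\ell(q) = n$). For $S$, the rank is $\deg m(Y) = 1 + \sum_{i=1}^r [L_i : K]$, which also equals $N$ by Proposition \ref{UnityQInv}.

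Since $S \hookrightarrow R^q$ is then an injection of free $\W(k)$-modules of the same finite rank, its cokernel is $\ell$-power torsion and Nakayama's lemma reduces the problem to surjectivity of $S/\ell S \to R^q/\ell R^q$. Because $n$ is invertible in $\W(k)$ (as $n \leq \ell - 1$), the averaging operator $e = \frac{1}{n}\sum_{i=0}^{n-1} \sigma^i$, where $\sigma$ is the $\W(k)$-algebra automorphism $X \mapsto X^q$ of $R$, gives a $\W(k)$-module decomposition $R = R^q \oplus (1-e)R$. Reducing this decomposition modulo $\ell$ identifies $R^q/\ell R^q$ with the fixed subspace $(R/\ell R)^q$, a $k$-subspace of dimension $N$.

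The heart of the argument is then the production of $N$ linearly independent elements in the image of $S/\ell S \to R/\ell R$. The ring $R/\ell R$ equals $k[X]/(X-1)^{\ell^r}$, a local ring whose maximal ideal is generated by $X - 1$. Corollary \ref{PullBackModL} asserts that $f(X) - n$ has $(X-1)$-adic valuation exactly $n$, so the elements $(f(X) - n)^j$ for $0 \leq j \leq N-1$ have distinct valuations $0, n, 2n, \ldots, n(N-1) = \ell^r - 1$, all strictly less than $\ell^r$. Any nontrivial $k$-linear combination of these elements has valuation equal to the smallest valuation occurring among its nonzero terms, and so is nonzero. These $N$ elements all lie in the image of $S/\ell S$, which therefore has $k$-dimension at least $N$ inside the $N$-dimensional space $(R/\ell R)^q$; equality forces $S/\ell S \twoheadrightarrow R^q/\ell R^q$, and the proof concludes.

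The main obstacle is the mod-$\ell$ step. Unlike the situation over $K$, the ring $R$ does not decompose as a product of its cyclotomic pieces $\W(k)[\zeta_i]$: modulo $\ell$ the polynomials $\Phi_{\ell^i}(X)$ all collapse to powers of $X - 1$, so $R/\ell R$ is a single local ring and the naive rank comparison via cyclotomic factors is not available. Corollary \ref{PullBackModL}, which pins down the exact order of vanishing of $f(X) - n$ at $X = 1$, is precisely the ingredient that overcomes this degeneration.
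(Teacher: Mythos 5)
Your proof is correct, and it follows the paper's overall strategy (match the two rings after inverting $\ell$, then settle the integral question by a mod-$\ell$ argument hinging on Corollary~\ref{PullBackModL}); the interesting divergence is in how the mod-$\ell$ step is packaged. The paper never explicitly compares ranks: it observes that every $q$-invariant $x \in R$ satisfies $\ell^b x \in \phi(S)$ for some $b$, so surjectivity of $\phi\colon S \to R^q$ reduces by downward induction on $b$ to the implication ``$\ell x \in \phi(S) \Rightarrow x \in \phi(S)$,'' which, using $\ell$-torsion-freeness, is exactly injectivity of $S/\ell S \to R/\ell R$; that injectivity is then read off from the exact vanishing order of $f(X)-n$ at $X=1$ by computing the pullback of $(X-1)^{\ell^r}$ under $k[Y]\to k[X]$. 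You instead count ranks ($N = 1 + (\ell^r-1)/n$ on both sides), invoke Nakayama to reduce to surjectivity of $S/\ell S \to R^q/\ell R^q$, identify $R^q/\ell R^q$ with $(R/\ell R)^q$ via the averaging idempotent $\tfrac{1}{n}\sum\sigma^i$ (legitimate since $n<\ell$), and then exhibit the basis $(f(X)-n)^j$, $0 \le j \le N-1$, of distinct $(X-1)$-adic valuations inside the image. Both routes are sound and rest on exactly the same key input (Corollary~\ref{PullBackModL}); the paper's is marginally leaner because it sidesteps the rank computation and the idempotent decomposition, while yours is arguably more transparent in that it produces an explicit basis of $(R/\ell R)^q$ lying in the image and makes the ``both sides have dimension $N$'' bookkeeping visible.
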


\begin{proof} First of all, inverting $\ell$, the map $$S\otimes_{\W(k)}K\to R\otimes_{\W(k)}K$$ gives an isomorphism of $S\otimes L$ onto the $q$-invariants of $R\otimes K$.  Indeed, the Chinese remainder theorem implies that $R\otimes K$ is the product of the $K[\zeta_i]$ so that the space of $q$-invariants is the product of the $L_i$.  Proposition \ref{UnityQInv} then implies that this space is the product of the $K(\omega_i)$, that is, the image of $S\otimes K$ (again by the Chinese Remainder Theorem).

In particular, if $x\in R$ is a $q$-invariant, $\ell^bx$ lies in the image of $S$ for some sufficiently large $b$.  Thus, it suffices to show that for any $x\in R$ such that $\ell x$ is in the image of $S$, $x$ is in the image of $S$ as well.  This statement, in turn, is equivalent to showing that the map $S/\ell S\to R/\ell R$ is injective.

Now, all the roots of $m(Y)$ are, by definition, Galois conjugates of the various $\omega_i$, all of which reduce to $n$ modulo $\ell$.  Hence, modulo $\ell$, $m(Y)$ is a power of $Y-n$ and we see easily that its degree is $$1+\frac{\ell^r-1}{n}=\left\lceil\frac{\ell^r}{n}\right\rceil.$$  Likewise, modulo $\ell$, $X^{\ell^r}-1$ is equal to $(X-1)^{\ell^r}$.  Thus the statement we need to show is that, under the map, $k[Y]\to k[X]$, induced by $Y\mapsto f(X)$, the pull back of $(X-1)^{\ell^r}$ is equal to the ideal generated by $$(Y-n)^{\left\lceil\frac{\ell^r}{n}\right\rceil}.$$  This statement, however, follows easily from Corollary \ref{PullBackModL}.\end{proof}

\section{The Central Action of the Group Algebra}

\label{S:CentralAction} Once again, $\bar{\pi}$ is the cuspidal representation $\bar{\pi}_1$ of $G$ (so that $n$ is order of $q$ modulo $\ell$).  We put $r=\ord_\ell(q^n-1)$.  As it will be convenient for us to perform explicit calculations with characters, we choose a generator, $\epsilon$, of the $\ell$-torsion part of $\F_{q^n}^\times$.  Recall that we have fixed an identification between $\F_{q^{n!}}$ and its character group, so that $\epsilon$ gives a character, $\theta:\F_{q^n}^\times\to L^\times$, which is trivial on an $\ell$-regular element.

We denote by $\ms{R}$ the representations of $G$ over $L$ given in Proposition \ref{CharZeroProjEnv}.  If $\pi$ is a representation in $S$ then we get a map $\delta_\pi:Z(\W(k)[G])\to L$ which takes an element to its action on $\pi$.  We obtain the map $\delta:Z(\W(k)[G])\to R$, where $$R=\prod_{\pi\in\ms{R}}L.$$  The aim of this section is to compute the image of $\delta$.  This map, $\delta$, is of course the one considered at end of Section \ref{S:ProjEnv} and so this computation will complete our work.

A supercuspidal lift of $\bar{\pi}$ is the supercuspidal representation coming from a $\ell$-power torsion element, $\epsilon^i$, of $\F_{q^n}^\times$, (where $i$ is not divisible by $\ell^r$). We denote this representation by $\pi_i$.  We denote the Steinberg representation $\St_L$ by $\pi_0$. Thus if we let $\mc{I}\subset\Z$ be a collection of representatives for the multiplicative action of $q$ on $\Z/\ell^r\Z$ (and assume that $\mc{I}$ has been chosen to contain 0), we see that $\ms{R}=(\pi_i)_{i\in\mc{I}}$.

Thus we can view $R$ as $\prod_{i\in\mc{I}}L$.  Notationally, we will often write elements of $R$ as ordered pairs $(x,y_i)$ where $x$ is the zeroth (so Steinberg) coordinate and $y_i$ is the $i$th coordinate for $i\neq0$ (the value of $y_i$ may of course depend on $i$).  We will also write $\delta_i$ for $\delta_{\pi_i}$.  Finally, for a conjugacy class $C$ of $G$, we will let $\beta_C=\sum_{t\in C}t$ be the corresponding element of $Z(\W(k)[G])$.

The linearity of trace gives the following easy but important observation:

\begin{lem}\label{ActionTrace} If $C$ is a conjugacy class of $G$, then
$$\delta_i(\beta_C)=\frac{|C|}{\dim\pi_i}\Tr|_{\pi_i}(C).$$\end{lem}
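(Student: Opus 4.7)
The plan is to exploit Schur's lemma together with the linearity of trace, which are the two ingredients flagged by the hypothesis and the statement. Since $\beta_C$ is an element of the center $Z(\W(k)[G])$, and $\pi_i$ is an absolutely irreducible $L[G]$-representation, the action of $\beta_C$ on $\pi_i$ must be by a scalar in $L$. By the very definition of $\delta_i$, that scalar is $\delta_i(\beta_C)$. Hence the operator $\beta_C$ acting on $\pi_i$ has trace $\delta_i(\beta_C)\cdot\dim\pi_i$.

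On the other hand, I would compute this same trace directly from the definition $\beta_C=\sum_{t\in C}t$. By linearity of trace,
$$\Tr|_{\pi_i}(\beta_C)=\sum_{t\in C}\Tr|_{\pi_i}(t),$$
and because the character of $\pi_i$ is a class function, each summand equals $\Tr|_{\pi_i}(C)$, so the total is $|C|\cdot\Tr|_{\pi_i}(C)$. Equating the two expressions for the trace and dividing by the nonzero integer $\dim\pi_i$ (which is invertible in $L$ since we are working in characteristic zero) yields the claimed formula.

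There is essentially no obstacle here; the only thing worth verifying carefully is that the hypothesis $\ell>n$ plays no role in this particular lemma, since the relevant computation takes place after extending scalars to $L$, where characters and dimensions behave classically. The lemma is a general fact about central elements of group algebras acting on absolutely irreducible characteristic zero representations, specialized to the list $\ms{R}$ of representations described in Proposition \ref{CharZeroProjEnv}.
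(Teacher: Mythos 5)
Your argument is exactly the one the paper has in mind: the paper gives no separate proof, introducing the lemma only with the remark that ``the linearity of trace gives the following easy but important observation,'' and the intended reasoning is precisely the Schur's lemma plus trace-linearity computation you spell out. The proposal is correct and matches the paper's (implicit) approach.
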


Lemma \ref{ActionTrace} indicates that it will important for us to understand the characters of the $\pi_i$, which we will now record.  First of all, we call a conjugacy class of $G$ \emph{primary} if the corresponding minimal polynomial is a power of a single irreducible polynomial.  For $i\neq 0$, the character of the representation $\pi_i$ is equal to zero on all classes that are not primary. For a primary class, $C$, we choose an eigenvalue $t$ of $C$, denote the degree of $t$ over $\F_q$ by $a$ and let $x$ be the number of Jordan blocks in a matrix in $C$ in Jordan canonical form.  On such $C$, the character of $\pi_i$ is $$(-1)^{n-x}(q^a-1)(q^{2a}-1)\cdots(q^{(x-1)a}-1)[\theta^i(t)+\theta^{iq}(t)+\cdots+\theta^{iq^{a-1}}],$$ (see p.275 of \cite{DJ}; note there is a typographical error in the formula given there).  In particular, the dimension of $\pi_i$ does not depend on $i$ and it is equal to $$(q^{n-1}-1)(q^{n-2}-1)\cdots(q-1).$$

The Steinberg character can be defined for any finite reductive group over a finite field of characteristic $p$.  Its character vanishes at group elements with of order divisible $p$.  Thus in our case, the Steinberg character vanishes on a matrix that is not diagonalizable.  For an element, $g$, of order prime to $p$, the Steinberg character at $g$ is, up to sign, the order of a $p$-Sylow subgroup of the centralizer of $g$ (see Theorem 3.1 of \cite{Steinberg} and also \cite{SteinHis}).  We have of course recorded the Steinberg character only up to sign, but that will be sufficient for our purposes.  Note that the dimension of the Steinberg character is $q^{n(n-1)/2}$. Thus the dimension of each representation in $\ms{R}$ is a unit in $\W(k)$.

Lemma \ref{ActionTrace} also indicates that we will need some control on the order of conjugacy classes in $G$.  The relevant observation is the following:

\begin{lem}\label{CentOrder} Let $C$ be the conjugacy class in $G$.  Then unless $C$ is primary and diagonalizable, we have $\ord_\ell(|C|)=r$.\end{lem}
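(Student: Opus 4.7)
The plan is to reduce the computation of $\ord_\ell(|C|)$ to a question about the centralizer, and then analyze the centralizer via the structure of $g$ as an $\F_q[x]$-module. First I would observe that $\ord_\ell(|G|)=r$: writing $|G|=q^{n(n-1)/2}\prod_{i=1}^{n}(q^i-1)$ and using that $\ell$ divides $q^i-1$ iff $n\mid i$ (since $n$ is the multiplicative order of $q$ modulo $\ell$), only the factor $i=n$ contributes, giving $\ord_\ell(|G|)=\ord_\ell(q^n-1)=r$. Since $|C|=|G|/|Z_G(g)|$ for any $g\in C$, it suffices to prove that $\ell\nmid |Z_G(g)|$ whenever $g$ is not primary and diagonalizable.

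Next I would use rational canonical form to describe the centralizer. Decompose $V=\F_q^n$ into primary components $V=\bigoplus_j V_j$, one for each irreducible factor $P_j$ (of degree $d_j$) of the characteristic polynomial of $g$, and record the Jordan structure on $V_j$ by a partition $\lambda^{(j)}$ of $n_j:=\dim(V_j)/d_j$. Then $Z_G(g)=\prod_j \Aut_{R_j}(V_j)$, where $R_j=\F_q[x]/P_j^{\lambda^{(j)}_1}$ is a local principal ideal ring with residue field $\F_{q^{d_j}}$. Each factor $\Aut_{R_j}(V_j)$ fits in a short exact sequence whose kernel has $p$-power order (for $p$ the characteristic of $\F_q$) and whose quotient is isomorphic to $\prod_k \GL_{m_k^{(j)}}(\F_{q^{d_j}})$, where $m_k^{(j)}$ is the multiplicity of $k$ as a part of $\lambda^{(j)}$. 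This ``Levi decomposition'' of the centralizer of an element of $\GL_n$ is classical; I would cite a standard reference rather than reprove it.

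Because $\ell\ne p$, one then has $\ord_\ell(|Z_G(g)|)=\sum_{j,k}\ord_\ell(|\GL_{m_k^{(j)}}(\F_{q^{d_j}})|)$. Now $|\GL_m(\F_{q^d})|$ has positive $\ell$-valuation iff some $i\in\{1,\ldots,m\}$ satisfies $n\mid d i$, which in turn requires $m\ge n/\gcd(n,d)$. Assume $\ell\mid |Z_G(g)|$: then for some $j,k$ we have $m_k^{(j)}\ge n/\gcd(n,d_j)$. Combining this with the dimension bound $d_j k\, m_k^{(j)}\le n$ gives $d_j k\le \gcd(n,d_j)\le d_j$, forcing $k=1$ and $d_j\mid n$, and moreover $d_j\, m_1^{(j)}=n$. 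Equality in the dimension bound means $V_j$ exhausts $V$, so $g$ has a single primary component, and the partition $\lambda^{(j)}=(1,\ldots,1)$ says that $g$ is diagonalizable -- contradicting the hypothesis. Hence $\ell\nmid |Z_G(g)|$ as desired.

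The main obstacle is the Levi decomposition of $\Aut_{R}(M)$ for a finitely generated module over a local principal ideal ring, which gives the clean factorization of the $\ell$-part of the centralizer; the remainder of the argument is straightforward combinatorics driven by the fact that $n$ is the multiplicative order of $q$ modulo $\ell$.
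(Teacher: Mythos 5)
Your argument is correct and it follows the same underlying strategy as the paper: compute $\ord_\ell(|G|)=r$, observe $|C|=|G|/|Z_G(g)|$, and show that $\ell\nmid|Z_G(g)|$ unless $C$ is primary and diagonalizable. The difference is that the paper discharges the centralizer analysis by citing Green's formula for centralizer orders (pp.~409--410 of Green's paper), whereas you unpack that citation: you invoke the standard factorization of $\Aut_R(V)$ for a finitely generated module over a local principal ideal ring (a $p$-power kernel with quotient $\prod_k\GL_{m_k}(\F_{q^d})$), then run a clean combinatorial argument from the inequality chain $d_j\,k\,m_k^{(j)}\le n$ and $m_k^{(j)}\ge n/\gcd(n,d_j)$ to force $k=1$, $d_j\mid n$, and $V_j=V$. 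One small step you leave implicit but which is immediate: once $k=1$ and $d_j m_1^{(j)}=n=\dim V_j$, the parts of size $1$ already exhaust the partition of $n_j=n/d_j$, so $\lambda^{(j)}=(1^{n_j})$ and $g$ is diagonalizable. What your version buys is a self-contained proof in place of a pointer to Green, at the cost of needing the (standard, citable) structure theorem for $\Aut_R(V)$; the underlying number-theoretic observation that $n\mid di$ is needed for $\ell\mid q^{di}-1$ is the same in both.
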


\begin{proof} Barring the cases mentioned, it suffices to show that the order of the centalizer of an element in $C$ is not divisible by $\ell$ (since $\ord_\ell(|G|)=r$).  This fact, however, follows from a discussion on pages 409 - 410 of \cite{Green} regarding the order of these centralizers (note there is a typographical error in the centered equation at the bottom of p. 409: Green's explanation footnote makes the correct equation clear).\end{proof}

We denote by $S$ the $\W(k)$-subalgebra of $R$ generated by the element $(\ell^r,0)$.  In other words, $S$ is the subalgebra of $R$ consisting of elements of the form $(x,y)$ (so that the $i$th coordinate does not depend on $i$ for $i\neq0$) with $x\equiv y\mod{\ell^r}$.  The subalgebra $S$ will play a key role in our calculations as we will show that $\delta(\beta_C)\in S$ for many of the conjugacy classes, $C$, of $G$ (to be precise, this containment will hold for all those conjugacy classes of $G$ on which all of our supercuspidal characters coincide).  We being by showing that $S$ is actually contained in the image of $\delta$.

\begin{pro}\label{Realized} The element $(\ell^r,0)\in R$ is contained in the image of $\delta$.\end{pro}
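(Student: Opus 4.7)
The plan is to exhibit an element $z \in Z(\W(k)[G])$ with $\delta(z) = (\ell^r, 0)$; the natural candidate is $z := \ell^r e_{\St}$, where
\begin{equation*}
e_{\St} := \frac{\chi_{\St}(1)}{|G|}\sum_{g \in G} \chi_{\St}(g^{-1})\, g \;\in\; K[G]
\end{equation*}
is the central idempotent of $K[G]$ cutting out the Steinberg component. Since $e_{\St}$ acts as the identity on $\St_L = \pi_0$ and as zero on every other irreducible $L$-representation of $G$ — in particular on each supercuspidal $\pi_i$ with $i \in \mc{I}\setminus\{0\}$ — we would have $\delta(z) = (\ell^r, 0)$ immediately, provided we can check that $z$ genuinely lies in $\W(k)[G]$. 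Centrality is automatic, because $\chi_{\St}$ is a class function.

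The entire argument thus reduces to an integrality check, which I would carry out by computing the coefficients of $z$ explicitly. Using $|G| = q^{n(n-1)/2}\prod_{i=1}^n (q^i-1)$ and $\chi_{\St}(1) = q^{n(n-1)/2}$, the coefficient of $g \in G$ in $z$ simplifies to
\begin{equation*}
\frac{\ell^r\, \chi_{\St}(g^{-1})}{\prod_{i=1}^n (q^i-1)}.
\end{equation*}
The Steinberg character takes integer values (as recorded in the discussion preceding Lemma \ref{CentOrder}), so it is enough to verify that this rational number has non-negative $\ell$-adic valuation; it will then automatically lie in $\Z_{(\ell)} \subset \W(k)$.

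The crux is a short arithmetic observation. Because $n$ is the multiplicative order of $q$ modulo $\ell$, the prime $\ell$ divides $q^i - 1$ precisely when $n \mid i$; in the range $1 \leq i \leq n$ only $i = n$ qualifies, contributing a factor of $\ell^r$. Thus $\ord_\ell\bigl(\prod_{i=1}^n (q^i-1)\bigr) = r$, which is cancelled exactly by the $\ell^r$ in the numerator, so the coefficient has $\ell$-valuation zero. This gives $z \in \W(k)[G]$ and, via the paragraph above, the claim.

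The only real obstacle — really just the observation that makes the whole approach work — is the happy numerical coincidence that $\ord_\ell\bigl(|G|/\chi_{\St}(1)\bigr) = r$: the dimension of the Steinberg representation absorbs exactly the $p$-part of $|G|$, and the hypothesis $\ell > n$ ensures that among the remaining cyclotomic factors $q^i - 1$ only the last one, $q^n - 1$, carries any $\ell$-divisibility. This is precisely the slack that lets $\ell^r e_{\St}$ be integral while still acting by the nonzero scalar $\ell^r$ on $\St_L$.
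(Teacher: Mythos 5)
Your argument is correct, and it takes a genuinely different route from the paper's. The paper evaluates $\delta$ on the class sum $\beta_C$ of the regular unipotent class $C$ (a single $n\times n$ Jordan block with eigenvalue $1$): the Steinberg character vanishes there, the supercuspidal characters are all $(-1)^{n-1}$, and Lemmas \ref{ActionTrace} and \ref{CentOrder} then show $\delta(\beta_C)$ equals $(0,c)$ with $\ord_\ell(c)=r$; one recovers $(\ell^r,0)$ from this together with $\ell^r\cdot\delta(1)$. You instead exhibit the preimage directly as $\ell^r e_{\St}$ and verify $\ell$-integrality of the coefficients, which boils down to $\ord_\ell\bigl(|G|/\chi_{\St}(1)\bigr)=\ord_\ell\bigl(\prod_{i=1}^n(q^i-1)\bigr)=r$. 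Both hinge on the same arithmetic fact --- that the $\ell$-part of $|G|$ is exactly $\ell^r$, a consequence of $n$ being the order of $q\bmod\ell$ (the hypothesis $\ell>n$ is not actually needed for this particular count, though it does no harm to invoke it) --- but your construction produces $(\ell^r,0)$ on the nose, trading the explicit character evaluations of the paper for the explicit idempotent formula. One small stylistic note: when you conclude the coefficient "has $\ell$-valuation zero,'' you only actually need and only actually prove $\ord_\ell\geq 0$ (the Steinberg character value may itself be divisible by $\ell$), but this does not affect the argument.
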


\begin{proof} Consider the class, $C$, corresponding to a single $n\times n$ Jordan block of eigenvalue 1.  The Steinberg character vanishes on this class. Since, for $i\neq0$, the character of $\pi_i$ is $\pm1$ (the sign being independent of $i$), the claim follows from Lemmas \ref{ActionTrace} and \ref{CentOrder}.\end{proof}

We will now show that many of the $\delta(\beta_C)$ lie in $S$.

\begin{pro}\label{CuspVanish} Let $C$ be a conjugacy class which is not primary. Then $\delta(\beta_C)\in S$.\end{pro}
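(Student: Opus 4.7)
The plan is to unpack the definition of $S$ and evaluate $\delta(\beta_C)$ coordinate-by-coordinate. Recall that $\delta(\beta_C) = (\delta_0(\beta_C), \delta_i(\beta_C))_{i\neq 0}$, and that $S$ consists of tuples $(x,y)$ (with $y$ independent of $i\neq 0$) satisfying $x\equiv y\pmod{\ell^r}$. So it suffices to show two things: that $\delta_i(\beta_C)=0$ for every $i\neq 0$, and that $\delta_0(\beta_C)\equiv 0\pmod{\ell^r}$.

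First I would dispatch the supercuspidal coordinates. Since $\pi_i$ is supercuspidal for $i\neq 0$, the character formula recorded in Section \ref{S:CentralAction} (via \cite{DJ}) shows that $\Tr|_{\pi_i}$ vanishes identically on conjugacy classes that are not primary. Applying Lemma \ref{ActionTrace} gives $\delta_i(\beta_C)=0$ for $i\neq 0$.

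For the Steinberg coordinate, I would combine Lemma \ref{ActionTrace} with the two divisibility facts available. By Lemma \ref{ActionTrace},
\begin{equation*}
\delta_0(\beta_C)=\frac{|C|}{\dim\pi_0}\Tr|_{\pi_0}(C).
\end{equation*}
The denominator $\dim\pi_0=q^{n(n-1)/2}$ is a unit in $\W(k)$ since $q$ is prime to $\ell$. The Steinberg character evaluated on $C$ is either zero (if $C$ is not diagonalizable in some Levi, equivalently not of order prime to the characteristic $p$) or, up to sign, the order of a $p$-Sylow of a centralizer; in either case $\Tr|_{\pi_0}(C)\in\Z$, hence in $\W(k)$. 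Finally, since $C$ is not primary, Lemma \ref{CentOrder} gives $\ord_\ell(|C|)=r$, so $\ell^r\mid |C|$ and therefore $\ell^r\mid \delta_0(\beta_C)$. Combining this with the vanishing of the other coordinates shows $\delta(\beta_C)\in S$, as required.

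There is really no main obstacle here; the content of the statement is bookkeeping on character values and $\ell$-adic valuations, and all the serious inputs (the character formulas, the orders of centralizers, and the construction of $S$) have already been set up in the preceding lemmas. The only point that warrants a brief check is that the Steinberg value is indeed a rational integer (so that the congruence $\ell^r\mid \delta_0(\beta_C)$ makes sense in $\W(k)$), but this is immediate from Steinberg's description of the character as $\pm$ the order of a Sylow subgroup on $p$-regular elements and $0$ elsewhere.
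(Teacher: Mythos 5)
Your proof is correct and follows essentially the same route as the paper's: supercuspidal characters vanish on non-primary classes, so only the Steinberg coordinate needs attention, and there Lemma \ref{CentOrder} gives $\ell^r\mid|C|$ while the Steinberg character value is an integer and $\dim\pi_0$ is an $\ell$-unit, so Lemma \ref{ActionTrace} finishes it. The paper's proof is just a more compressed version of the same argument.
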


\begin{proof} Since a supercuspidal character vanishes on $C$, it suffices to show that $\delta_0(\beta_C)\in\ell^r\W(k)$.  To this end, we note that Lemma \ref{CentOrder} implies that $|C|$ is divisible by $\ell^r$.  Since the character of $\pi_0$ lies in $\Z\subset\W(k)$, the claim follows from Lemma \ref{ActionTrace}.\end{proof}

We next consider primary classes admitting an eigenvalue that does not have degree $n$.  If the class is not diagonalizable, the argument is straight forward.

\begin{pro}\label{NotDivisible} Suppose that $C$ is primary with an eigenvalue of degree less than $n$ over $\F_q$.   Suppose also that $C$ is not diagonalizable.  Then $\delta(\beta_C)\in S$.\end{pro}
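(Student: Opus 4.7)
The plan is to verify directly the two defining conditions for $\delta(\beta_C)\in S$: independence of $i$ on the supercuspidal coordinates, and congruence of the Steinberg coordinate with the common supercuspidal value modulo $\ell^r$. Throughout I will apply Lemma \ref{ActionTrace} together with the explicit character formulas recorded at the start of this section.

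Because $C$ is not diagonalizable, the Steinberg character vanishes on $C$, so $\delta_0(\beta_C)=0$. For the supercuspidal coordinates, fix an eigenvalue $t$ of $C$ of degree $a<n$ over $\F_q$; then $a\mid n$ and $t\in\F_{q^a}^\times$. Since $n$ is the order of $q$ modulo $\ell$ and $a<n$, the integer $q^a-1$ is prime to $\ell$, so $t$ is $\ell$-regular in $\F_{q^n}^\times$. As $\theta$ is trivial on $\ell$-regular elements, $\theta^{iq^j}(t)=1$ for all $i$ and all $j\in\{0,\ldots,a-1\}$. The character formula for $\pi_i$ therefore collapses to
$$\chi_{\pi_i}(C)=(-1)^{n-x}\,a\prod_{j=1}^{x-1}(q^{ja}-1),$$
where $x$ denotes the number of rational Jordan blocks of $C$, and this expression is manifestly independent of $i\neq 0$.

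It remains to show $\delta_i(\beta_C)\in\ell^r\W(k)$; this, combined with $\delta_0(\beta_C)=0$, will give the required congruence and hence $\delta(\beta_C)\in S$. Writing $\delta_i(\beta_C)=|C|\chi_{\pi_i}(C)/\dim\pi_i$, I estimate the $\ell$-adic valuation of each factor: (i) $\ord_\ell|C|=r$ by Lemma \ref{CentOrder} (using that $C$ is not diagonalizable); (ii) $\dim\pi_i$ is a unit in $\W(k)$, as noted after the character formula; (iii) $a$ is a unit because $a<n<\ell$; (iv) non-diagonalizability yields the strict inequality $ax<n$, so for every $j\in\{1,\ldots,x-1\}$ we have $ja<n$, which together with $n$ being the order of $q$ mod $\ell$ forces $q^{ja}-1$ to be a unit in $\W(k)$. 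Multiplying these contributions, $\delta_i(\beta_C)$ equals $|C|$ times a unit, hence has $\ell$-adic valuation exactly $r$, as required.

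The most delicate ingredient will be step (iv): the argument collapses without the strict inequality $ax<n$, since if $ax=n$ then the factor $q^{ax}-1$ would itself contribute $\ell$-divisibility and the clean accounting breaks down. This is precisely where non-diagonalizability is indispensable, and presumably why the diagonalizable case is treated separately in what follows.
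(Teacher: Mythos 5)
Your argument is correct and follows essentially the same route as the paper's proof; the only substantive difference is that you prove the slightly stronger claim that $\ord_\ell(\delta_i(\beta_C))=r$ exactly, whereas the paper only needs (and only establishes) $\delta_i(\beta_C)\in\ell^r\W(k)$, which it gets immediately from $\ord_\ell|C|=r$, the fact that $\dim\pi_i$ is a unit, and the mere observation that $\Tr|_{\pi_i}(C)\in\W(k)$.

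One diagnostic remark in your final paragraph is off, though. You assert that step (iv) — the unit-ness of the factors $q^{ja}-1$ for $1\le j\le x-1$ — is where non-diagonalizability is ``indispensable,'' because $ax<n$. But this is not where it matters: even when $C$ is diagonalizable, so that $ax=n$, one still has $(x-1)a = n-a < n$ for every $j\le x-1$ (using only $a\ge 1$), so those factors are units regardless. The actual places non-diagonalizability enters are (a) the vanishing of the Steinberg character on $C$, which forces $\delta_0(\beta_C)=0$, and (b) the applicability of Lemma~\ref{CentOrder}, which excludes precisely the ``primary and diagonalizable'' classes and is what gives $\ord_\ell|C|=r$. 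In the diagonalizable case both of these fail, which is why Proposition~\ref{NotDivisible2} needs a genuinely different argument (comparing the nonzero Steinberg value with the supercuspidal value via Lemma~\ref{Signs}). Your proof stands, but this closing diagnosis should be corrected.
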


\begin{proof} Since $C$ is not diagonalizable, the Steinberg character vanishes on $C$.  Furthermore, if $t$ is an eigenvalue of $C$, $\ell$ does not divide the order of $t$ so that $\theta^{i}(t)=1$ for any $i$.  Hence, for $i\neq0$, the character of $\pi_i$ on $C$ is an element of $\W(k)$ which does not depend on $i$.  Lemma \ref{CentOrder} thus shows that $\delta_i(\beta_C)\in\ell^r\W(k)$ (using Lemma \ref{ActionTrace} as usual) and the claim follows.\end{proof}

To handle the diagonalizable case, we will need to handle the characters a bit more delicately.  In particular, we are in need of the following routine result:

\begin{lem}\label{Signs}  Suppose that $v,d\in\N$ with $vd=n$.  Then $$\frac{(q^d-1)(q^{2d}-1)\cdots(q^{(v-1)d}-1)}{v}\equiv q^{n(v-1)/2}\mod{\ell^r}.$$
\end{lem}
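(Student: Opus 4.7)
The plan is to reduce the stated congruence to the classical identity $\prod_{i=1}^{v-1}(1-\alpha^i)=v$, which holds whenever $\alpha$ is a primitive $v$-th root of unity in a ring over which $X^v-1$ splits into distinct linear factors. The role of $\alpha$ will be played by $Q:=q^d$, viewed modulo $\ell^r$.

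First I would dispose of the degenerate case $v=1$, where both sides equal $1$. For $v\geq 2$, note that $v$ is a unit modulo $\ell^r$ since $\ell>n\geq v$, and $q^d-1$ is a unit as well, since $d<n$ and $n$ is the multiplicative order of $q$ modulo $\ell$; these facts guarantee that the statement is well-posed. Next I would carry out the algebraic rearrangement
$$\prod_{i=1}^{v-1}(Q^i-1)=Q^{v(v-1)/2}\prod_{i=1}^{v-1}(1-Q^{-i}),$$
observing that $Q^{v(v-1)/2}=q^{n(v-1)/2}$ and, using $Q^v=q^n\equiv 1\pmod{\ell^r}$ (the very definition of $r$), that the substitution $i\mapsto v-i$ identifies $\prod_{i=1}^{v-1}(1-Q^{-i})$ with $\prod_{i=1}^{v-1}(1-Q^i)$. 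This reduces the lemma to showing $\prod_{i=1}^{v-1}(1-Q^i)\equiv v\pmod{\ell^r}$.

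The core step is then to establish the factorization
$$X^v-1=\prod_{i=0}^{v-1}(X-Q^i)\qquad\text{in }(\Z/\ell^r\Z)[X],$$
whence dividing by $X-1$ and evaluating at $X=1$ yields the desired identity. Two inputs are needed. First, $Q$ has order exactly $v$ in $(\Z/\ell^r\Z)^\times$: the order of $q$ modulo $\ell^r$ divides $n$ (since $q^n\equiv 1$) and is divisible by $n$ (by reduction mod $\ell$), so it equals $n$, making $Q=q^d$ of order $n/d=v$. Second, $X^v-1$ must have exactly $v$ roots in $\Z/\ell^r\Z$; it is separable mod $\ell$ because $\ell\nmid v$, and it splits completely mod $\ell$ because $v\mid n\mid \ell-1$ (using that the order of $q\in\F_\ell^\times$ is $n$), so $\F_\ell$ contains all $v$-th roots of unity. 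Hensel's lemma then lifts the complete factorization from $\F_\ell$ to $\Z/\ell^r\Z$.

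The main obstacle I anticipate is keeping the orders and divisibilities modulo $\ell^r$ correctly tracked; once those are pinned down, the arithmetic content of the proof is simply the evaluation $(1+X+\cdots+X^{v-1})|_{X=1}=v$.
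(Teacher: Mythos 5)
Your proof is correct, and it takes a route that differs in execution from the paper's, even though both rest on the same classical identity $\prod_{\zeta\neq 1,\ \zeta^v=1}(1-\zeta)=v$. The paper's proof stays in characteristic zero: it shows the integer polynomial $(X-1)(X^2-1)\cdots(X^{v-1}-1)-(-1)^{v-1}v$ is divisible by $\Phi_v(X)$, then specializes at $X=q^d$ using the chain $\ell^r\mid\Phi_n(q)\mid\Phi_v(q^d)$ (the latter from $\Phi_n(X)\mid\Phi_v(X^d)$), and finally disposes of the sign by a separate observation that $q^{n(v-1)/2}\equiv(-1)^{v-1}\pmod{\ell^r}$. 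You instead realize the primitive $v$-th root of unity \emph{concretely} as $Q=q^d$ inside $\Z/\ell^r\Z$ itself, which requires you to note that $q$ has order exactly $n$ not merely modulo $\ell$ but modulo $\ell^r$, and to invoke a Hensel lifting of the split factorization of $X^v-1$ from $\F_\ell$. In exchange, your rearrangement $\prod_{i=1}^{v-1}(Q^i-1)=Q^{v(v-1)/2}\prod_{i=1}^{v-1}(1-Q^{-i})$ together with $Q^{-i}\equiv Q^{v-i}$ absorbs the power of $q$ and the sign in one stroke, so you never need the separate sign lemma. The trade-off is roughly: the paper trades Hensel for cyclotomic-polynomial divisibility facts, while you trade those facts for Hensel and a slightly more elementary product manipulation. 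One small inaccuracy worth noting: your remark that $q^d-1$ must be a unit ``so the statement is well-posed'' is unnecessary --- the only division in the statement is by $v$, and nothing requires $q^d-1$ to be invertible; this does not affect the rest of the argument.
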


\begin{proof} First suppose that $\zeta$ is a primitive $v$th root of unity over $\Q$ and consider the polynomial $$\sum_{i=0}^{v-1}X^i=\frac{X^v-1}{X-1} =\prod_{i=1}^{v-1}(X-\zeta^i)=(-1)^{v-1}\prod_{i=1}^{v-1}(\zeta^i-X).$$ Evaluating at $1$ shows that $$(\zeta-1)(\zeta^2-1)\cdots(\zeta^{v-1}-1)=(-1)^{v-1}v.$$  In other words, the polynomial \begin{equation}\label{PolyExp}(X-1)(X^2-1)\cdots(X^{v-1}-1)-(-1)^{v-1}v\end{equation} is divisible by $\Phi_v(X)$.

But now, since $\Phi_n(X)$ divides $\Phi_v(X^d)$, $\ell^r$ divides $\Phi_v(q^d)$.  Evaluating the polynomial in \eqref{PolyExp} at $q$, we conclude that $$(q^d-1)(q^{2d}-1)\cdots(q^{(v-1)d}-1)\equiv(-1)^{v-1}v\mod{\ell^r}.$$ A routine argument shows that $q^{n(v-1)/2}\equiv(-1)^{v-1}\mod{\ell^r}$ and the claim follows.\end{proof}

Using Lemma \ref{Signs}, we may prove the following:

\begin{pro}\label{NotDivisible2} Suppose that $C$ is a diagonalizable primary class whose eigenvalues have degree less than $n$ over $\F_q$.  Then $\delta(\beta_C)\in S$.\end{pro}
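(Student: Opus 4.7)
The plan is to compute each coordinate of $\delta(\beta_C)$ explicitly and then verify the congruence condition defining $S$. The key simplification is that the eigenvalue $t$ of $C$ has degree $a<n$ over $\F_q$, so $\ord(t)$ divides $q^a-1$. Since $n=\ord_\ell(q)$, this is coprime to $\ell$, so $t$ is $\ell$-regular and $\theta^i(t)=1$ for every $i$; the sum $\theta^i(t)+\theta^{iq}(t)+\cdots+\theta^{iq^{a-1}}(t)$ appearing in the supercuspidal character formula therefore collapses to the integer $a$.

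Setting $v=n/a$, the element $g\in C$ is diagonalizable over $\bar{\F}_q$ with each Galois conjugate of $t$ appearing $v$ times, and the centralizer $Z_G(g)\cong \GL_v(\F_{q^a})$ has order $q^{n(v-1)/2}\prod_{j=1}^{v}(q^{aj}-1)$. For $i\neq 0$, I would combine Lemma~\ref{ActionTrace} with the collapsed character formula; the factor $q^n-1=q^{av}-1$ cancels between $|C|$ and the trace, yielding
\[
\delta_i(\beta_C)=(-1)^{n-v}\,a\,q^{n(n-v)/2},
\]
independent of $i\neq 0$. This shows that all non-Steinberg coordinates of $\delta(\beta_C)$ agree.

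For the Steinberg coordinate, $g$ is semisimple of order prime to $p$, so $\Tr|_{\pi_0}(C)=\pm q^{n(v-1)/2}$ (the $p$-part of $|Z_G(g)|$). The sign is given by $\epsilon_G\epsilon_{Z_G(g)}=(-1)^n(-1)^v=(-1)^{n-v}$: the $\F_q$-split rank of $\GL_n$ is $n$, while that of $Z_G(g)$ viewed as an $\F_q$-algebraic group (Weil-restricted from $\F_{q^a}$) is $v$. Plugging this in and simplifying $|C|/\dim\pi_0$ produces
\[
\delta_0(\beta_C)=(-1)^{n-v}\,\prod_{j=1}^{n-1}(q^j-1)\Big/\prod_{j=1}^{v-1}(q^{aj}-1).
\]

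Finally, I would apply Lemma~\ref{Signs} twice modulo $\ell^r$. With its parameters $(d,v)$ equal to $(a,v)$ (our $v$), it gives $\prod_{j=1}^{v-1}(q^{aj}-1)\equiv v\,q^{n(v-1)/2}\pmod{\ell^r}$; with $(1,n)$, it gives $\prod_{j=1}^{n-1}(q^j-1)\equiv n\,q^{n(n-1)/2}\pmod{\ell^r}$. Substituting these into the formula for $\delta_0(\beta_C)$ and using $n/v=a$ yields
\[
\delta_0(\beta_C)\equiv (-1)^{n-v}\,a\,q^{n(n-v)/2}=\delta_i(\beta_C)\pmod{\ell^r},
\]
so $\delta(\beta_C)\in S$ as required. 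The main subtlety I expect is pinning down the sign of the Steinberg character at $g$; the remaining congruence reduces cleanly to two applications of Lemma~\ref{Signs}.
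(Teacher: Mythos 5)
Your proof is correct, and the overall skeleton (collapse the character sum to $a$ using $\ell$-regularity of $t$, compute both coordinates, apply Lemma~\ref{Signs} twice) is the same as the paper's. The one genuine difference is how the sign of the Steinberg character is handled. The paper deliberately records $\Tr|_{\pi_0}(C)$ only up to sign, obtaining $\delta_i(\beta_C)\equiv\pm\delta_0(\beta_C)\bmod\ell^r$, and then resolves the ambiguity indirectly: since $\pi_i$ and $\pi_0$ both reduce to modules with $\bar{\pi}$ as a constituent, $\beta_C$ acts by the same scalar on both modulo $\ell$; that scalar is a unit (it equals $a(-1)^{n-v}q^{n(n-v)/2}$ with $a\le n<\ell$ and $q\in\W(k)^\times$), so since $\ell\neq 2$ the sign must be $+$. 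You instead pin the sign down directly via the Deligne--Lusztig formula $\St_G(g)=\epsilon_G\,\epsilon_{Z_G(g)}\,|Z_G(g)|_p$, computing $\epsilon_G=(-1)^n$ for $\GL_n$ and $\epsilon_{Z_G(g)}=(-1)^v$ for the Weil restriction $\mathrm{Res}_{\F_{q^a}/\F_q}\GL_v$, giving the factor $(-1)^{n-v}$. Both are valid; the paper's route trades a careful sign bookkeeping (which the author evidently wanted to sidestep, having introduced the Steinberg character ``only up to sign'') for a short abstract argument using $\ell$-modular reduction, while yours is self-contained at the level of character formulas but requires knowing the relative ranks of the centralizer. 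Your computed values for $\delta_i$ and $\delta_0$ and the two substitutions from Lemma~\ref{Signs} (with $(d,v)=(a,n/a)$ and $(1,n)$) all check out.
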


\begin{proof} We again let $t$ be an eigenvalue of $C$.  Suppose that $a$ is the degree of $t$ and that $b=n/a$ (so that $b$ is the number of Jordan blocks in $C$).  Again since $\theta^i(t)=1$ for any $i$, Lemma \ref{ActionTrace} implies that $$\delta(\beta_C)=|C|\left(\pm\frac{q^{b(b-1)/2}}{q^{n(n-1)/2}},
\frac{(-1)^{n-b}a(q^a-1)(q^{2a}-1)\cdots(q^{(b-1)a}-1)}{(q-1)(q^2-1)\cdots(q^{n-1}-1)}\right).$$
In particular, for $i\neq0$, $\delta_i(\beta_C)$ does not depend on $i$. Moreover, two applications of Lemma \ref{Signs} (one for $v=n$ and one for $v=b$) show that $\delta_i(\beta_C)\equiv\pm\delta_0(\beta_C)\mod{\ell^r}$.  But because modular reductions of $\pi_i$ and $\pi_0$ must both contain $\bar{\pi}$ as a constituent, $\beta_C$ must act on $\pi_i$ and $\pi_0$ by the same scalar modulo $\ell$.  Since $\ell\neq 2$, we conclude the coordinates of $\delta(\beta_C)$ are equivalent modulo $\ell^r$.  \end{proof}

To summarize our work so far, Propositions \ref{Realized}, \ref{CuspVanish}, \ref{NotDivisible}, and \ref{NotDivisible2} show that the image of $\delta$ is the $\W(k)$-subalgebra of $R$ generated by $(\ell^r,0)$ and the elements $\delta(\beta_C)$, where $C$ is a primary conjugacy class admitting an eigenvalue, $t$, of degree $n$ over $\F_q$.

In the latter case, Lemma \ref{ActionTrace} shows that $$\delta(\beta_C)=|C|\left(\frac{\pm1}{q^{n(n-1)/2}},\frac{(-1)^{n-1}[\theta^i(t)+\theta^{iq}(t)+\cdots+\theta^{iq^{n-1}}(t)]}{(q-1)\cdots(q^{n-1})}\right),$$
as the order of the centralizer of an element in $C$ is $q^n-1$ (and so a $p$-Sylow group of this centralizer is trivial).  Since $|C|$ is a unit in $\W(k)$, we see that, up to multiplication by a unit in $\W(k)$, $\delta(\beta_C)$ is
$$\left(\frac{\pm(q-1)\cdots(q^{n-1})}{q^{n(n-1)/2}},\theta^i(t)+\theta^{iq}(t)+\cdots+\theta^{iq^{n-1}}(t)\right).$$
Lemma \ref{Signs} then shows that we may use an appropriate element of $S$ to obtain the element $$\left(\pm n,\theta^{i}(t)+\theta^{iq}(t)+\cdots+\theta^{iq^{n-1}}(t)\right).$$  Since these coordinates must be equivalent modulo $\ell$, the element must in fact be $$\gamma_t=\left(n,\theta^i(t)+\theta^{iq}(t)+\cdots+\theta^{iq^{n-1}}(t)\right).$$ Thus we see that the image of $\delta$ is generated by these $\gamma_t$ and $(\ell^r,0)$.

To simplify our notation, put $\zeta=\theta(\epsilon)$ so that $\zeta$ is a primitive $\ell^r$th root of unity and put $$\gamma=\gamma_{\epsilon}=(n,\zeta^i+\zeta^{iq}+\cdots+\zeta^{iq^{n-1}}).$$
We will show that the image of $\delta$ is generated by $\gamma$.

\begin{pro} If $t\in\F_{q^n}^\times$ has degree $n$ over $q$, then $\gamma_t$ is contained the $\W(k)$-subalgebra of $R$ generated by $\gamma$.\end{pro}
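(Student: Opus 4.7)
The plan is to realize both $\gamma$ and $\gamma_t$ as images of explicit elements of the cyclotomic algebra $\W(k)[X]/(X^{\ell^r}-1)$ studied in Section \ref{S:Alg}, so that the desired containment becomes an immediate consequence of Theorem \ref{qInvCylAlg}.

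First, I would introduce the ring homomorphism $\phi\colon \W(k)[X]/(X^{\ell^r}-1)\to R$ defined by sending $X$ to the tuple $(1,\zeta^i)_{i\in\mc{I}}$, which is well defined because each coordinate has order dividing $\ell^r$. Under $\phi$, the generator $f(X) = X + X^q + \cdots + X^{q^{n-1}}$ visibly maps to $\gamma$: the zeroth (Steinberg) coordinate is $f(1)=n$, and for $i\neq 0$ the $i$th coordinate is $f(\zeta^i)$, exactly matching the definition of $\gamma$.

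Next, I would write $t = t_{\mathrm{reg}}\cdot t_\ell$ as the product of its $\ell$-regular and $\ell$-power parts. Since $t_\ell$ lies in the $\ell$-power torsion of $\F_{q^n}^\times$ (which is generated by $\epsilon$), we may write $t_\ell = \epsilon^j$ for some $j\in\Z/\ell^r\Z$. Because $\theta$ is trivial on $\ell$-regular elements, $\theta(t)=\theta(\epsilon^j)=\zeta^j$, and therefore the $i$th coordinate of $\gamma_t$ equals $f(\zeta^{ij})$. This identifies $\gamma_t = \phi(f(X^j))$.

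Finally, I would check that $f(X^j)$ is invariant under the substitution $X\mapsto X^q$ on $\W(k)[X]/(X^{\ell^r}-1)$: indeed, $f(X^{jq}) - f(X^j) = X^{jq^n} - X^j$, which vanishes modulo $X^{\ell^r}-1$ because $q^n\equiv 1 \pmod{\ell^r}$. Theorem \ref{qInvCylAlg} then yields a polynomial $p\in\W(k)[Y]$ with $p(f(X)) = f(X^j)$, and applying $\phi$ gives $p(\gamma) = \gamma_t$, so $\gamma_t\in\W(k)[\gamma]$. Once the translation through $\phi$ is in place, the only conceptual point is the automatic $q$-invariance of $f(X^j)$ coming from the defining identity $q^n\equiv 1\pmod{\ell^r}$; everything else is routine bookkeeping, so no serious obstacle arises.
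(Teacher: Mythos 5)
Your argument is correct and is essentially the same as the paper's: the paper introduces $\alpha(X)=X^j+X^{jq}+\cdots+X^{jq^{n-1}}$ (which is your $f(X^j)$), observes it is fixed under $X\mapsto X^q$ modulo $X^{\ell^r}-1$, invokes Theorem \ref{qInvCylAlg} to write $\alpha\equiv h(f(X))$, and evaluates at $\zeta^i$ (including $i=0$); you package the same computation through an explicit homomorphism $\phi$, which is a cosmetic difference only.
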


\begin{proof} By definition of $\epsilon$, we may find a $j$ so that the $\ell$-power torsion part of $t$ is $\epsilon^j$.  This gives $\theta(t)=\theta(\epsilon^j)=\zeta^j$.  Hence
$$\gamma_t=(n,\zeta^{ij}+\zeta^{ijq}+\cdots+\zeta^{ijq^{n-1}}).$$
Correspondingly, consider the polynomial $$\alpha(X)=X^j+X^{jq}+\cdots+X^{jq^{n-1}}$$ in $W(k)[X].$  Modulo $(X^{\ell^r}-1)$, this polynomial is fixed under $X\mapsto X^q$ and so Theorem \ref{qInvCylAlg} shows that we may find a polynomial $h(X)\in\W(k)[X]$ such that $$\alpha(X)\equiv h(X+X^{q}+\cdots+X^{q^{n-1}})\mod{(X^{\ell^r}-1)}.$$  Evaluating at $\zeta^i$ for each $i$ (including $i=0$), we conclude that $\gamma_t=h(\gamma).$\end{proof}

Thus the image of $\delta$ is generated by $\gamma$ and $(\ell^r,0)$.  In order to show that $(\ell^r,0)$ is not a necessary generator, put $\mc{I}'=\mc{I}-\{0\}$ and $$g(X)=\prod_{i\in \mc{I'}}[X-(\zeta^i+\zeta^{iq}+\cdots+\zeta^{iq^{n-1}})].$$

\begin{pro} The element $(\ell^r,0)$ is contained in the $\W(k)$-subalgebra generated by $\gamma$.\end{pro}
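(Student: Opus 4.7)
My plan is to evaluate the polynomial $g(Y)$ at $\gamma$ directly. Since $g(Y) = \prod_{i \in \mc{I}'}(Y - \eta_i)$ where $\eta_i = \zeta^i + \zeta^{iq} + \cdots + \zeta^{iq^{n-1}}$ is the $i$-th coordinate of $\gamma$ for $i \in \mc{I}'$, and since the Steinberg coordinate of $\gamma$ is $n$, one has immediately
\[
g(\gamma) = (g(n), 0).
\]
It therefore suffices to show that $g(n) \in \W(k)$ differs from $\ell^r$ by a unit.

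The next step is to identify the factorization $g(X) = \prod_{j=1}^r m_j(X)$ in $\W(k)[X]$. This is a direct consequence of the definition $m(X) = (X-n)\prod_{j=1}^r m_j(X)$ together with the relation $m(X) = (X-n)g(X)$: both sides are monic of the same degree with the same roots, since for $i \in \mc{I}'$ with $v_\ell(i) = r - j$ one has that $\zeta^i$ is a primitive $\ell^j$-th root of unity, so $\eta_i$ is a Galois conjugate of $\omega_j$, and a count of $q$-orbits on $(\Z/\ell^j\Z)^\times$ shows each Galois conjugate arises exactly once. Hence
\[
g(n) = \prod_{j=1}^r m_j(n) = \prod_{j=1}^r N_{L_j/K}(n - \omega_j).
\]

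By Proposition \ref{UnityQInv}, $n - \omega_j$ is, up to sign, a uniformizer of the ring of integers $\W(k)[\omega_j]$ of $L_j$. The extension $L_j/K$ is totally ramified, since it sits inside the totally ramified cyclotomic extension $K(\zeta_j)/K$. Consequently the norm to $K$ of a uniformizer of $L_j$ is itself a uniformizer of $\W(k)$, so each $m_j(n) = u_j \ell$ for some unit $u_j \in \W(k)^\times$. Multiplying, $g(n) = u \ell^r$ with $u = \prod_j u_j$ a unit of $\W(k)$, and therefore $u^{-1} g(\gamma) = (\ell^r, 0)$ lies in $\W(k)[\gamma]$, as desired. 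There is no serious obstacle here beyond the orbit-matching identification of the roots of $g$ with the Galois conjugates of the $\omega_j$, which itself is essentially forced by comparing $g$ with $m/(X-n)$; the norm computation is then a standard consequence of Proposition \ref{UnityQInv} and total ramification.
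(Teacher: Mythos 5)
Your proof is correct and follows essentially the same route as the paper: both evaluate $g$ at $\gamma$ to obtain $(g(n),0)$ and then use Proposition~\ref{UnityQInv} to show $g(n)$ has $\ell$-adic valuation exactly $r$. The only cosmetic difference is that you group the factors of $g(n)$ into norms $N_{L_j/K}(n-\omega_j)$ and invoke total ramification, whereas the paper computes the $\W(k)$-valuation of each individual factor $n-\eta_i$ as $n/\phi(\ell^s)$ and sums; the two bookkeeping schemes are equivalent.
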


\begin{proof} By construction, $g(\gamma)=(a,0)$ where $$a=\prod_{i\in\mc{I}'}[n-(\zeta^i+\zeta^{iq}+\cdots+\zeta^{iq^{n-1}})].$$ Note that every nontrivial $\ell^r$th root of unity is represented in exactly one factor of $a$.  On the other hand, by Proposition \ref{UnityQInv}, if $i\in \mc{I}'$ is such that $\zeta^i$ is a primitive $\ell^s$th root of unity, the $\W(k)$-valuation of the factor coming from $i$ is $n/\phi(\ell^s)$.  Thus $\ord_\ell(a)=r$ and, up to multiplication by a unit in $\W(k)$, $g(\gamma)$ is $(\ell^r,0)$.\end{proof}

We remark that the minimal polynomial of $\gamma$ is $f(X)=(X-n)g(X)$.  We may thus show the following key result:

\begin{thm} The image of $\delta:Z(\W(k)[G])\to R$ is isomorphic as a $\W(k)$-algebra to $$\W(k)[Y]/(f(Y)).$$  That is, it is isomorphic to the invariants of $$\W(k)[X]/(X^{\ell^r}-1)$$ under the action of $q$.\end{thm}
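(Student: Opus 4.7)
The plan is to combine the generating statements of the preceding two propositions with Theorem \ref{qInvCylAlg}. Those propositions show that the image of $\delta$ equals the $\W(k)$-subalgebra $\W(k)[\gamma]\subseteq R$, and by construction $f(\gamma)=(\gamma-n)g(\gamma)=0$, so there is a surjective $\W(k)$-algebra homomorphism $\W(k)[Y]/(f(Y))\twoheadrightarrow \W(k)[\gamma]$. The theorem reduces to verifying that this surjection is an isomorphism and that the resulting quotient agrees with the $q$-invariants in $\W(k)[X]/(X^{\ell^r}-1)$; both assertions will fall out once $f(Y)$ is identified with the polynomial $m(Y)$ from Section \ref{S:Alg}.

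To prove $f(Y)=m(Y)$, it suffices, after cancelling the common factor $Y-n$, to show $g(Y)=\prod_{s=1}^r m_s(Y)$. For each $i\in\mathcal{I}'$ let $s(i)$ be the unique integer with $\zeta^i$ of exact order $\ell^{s(i)}$. The $i$th coordinate $\zeta^i+\zeta^{iq}+\cdots+\zeta^{iq^{n-1}}$ of $\gamma$ is the image of $\omega_{s(i)}$ under the $K$-embedding $K(\zeta_{s(i)})\hookrightarrow L$ sending $\zeta_{s(i)}\mapsto\zeta^i$, hence is a Galois conjugate of $\omega_{s(i)}$ over $K$. As $i$ varies over those representatives in $\mathcal{I}'$ with $s(i)=s$, these conjugates are distinct, and an orbit count gives their cardinality as $\phi(\ell^s)/\ord_{\ell^s}(q)=[K(\omega_s):K]=\deg m_s$ by Proposition \ref{UnityQInv}, which is the total number of Galois conjugates of $\omega_s$ over $K$. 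They therefore exhaust the Galois orbit of $\omega_s$, and grouping the factors of $g$ by $s(i)$ yields $g=\prod_s m_s$.

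Granted $f=m$, injectivity of the induced map $\W(k)[Y]/(f(Y))\to R$ sending $Y\mapsto\gamma$ can be checked after tensoring with $K$, since both source and target are finite free $\W(k)$-modules. Over $K$ the Chinese remainder theorem gives $K[Y]/(m(Y))\cong K\times\prod_{s=1}^r K(\omega_s)$, and the induced map into $R\cong\prod_{i\in\mathcal{I}} L$ decomposes as a product of field embeddings, each of which is injective: the inclusion $K\hookrightarrow L$ on the Steinberg coordinate, together with the Galois embeddings $K(\omega_s)\hookrightarrow L$ carrying $\omega_s$ to its appropriate conjugate on each coordinate indexed by an $i$ with $s(i)=s$. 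This establishes the first isomorphism of the theorem, and Theorem \ref{qInvCylAlg} immediately supplies the second. I expect the identification $g=\prod_s m_s$ to be the only substantive step: it hinges on the orbit-counting equality and on the fact that the Galois orbits of the distinct $\omega_s$ are automatically disjoint; everything else is formal bookkeeping with monic polynomials over a discrete valuation ring.
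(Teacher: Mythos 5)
Your argument follows the same route as the paper's very terse proof, but it correctly fills in two facts the paper leaves as unproved remarks: that $\gamma$ generates the image with minimal polynomial $f$, and that $f=m$ (without which the ``that is'' in the theorem statement has no justification). The identification $g=\prod_s m_s$ by grouping the coordinates according to the exact $\ell$-power order of $\zeta^i$ and matching the count against the Galois orbit of $\omega_s$ is exactly the right way to see this.

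Two small repairs are needed in the writeup, neither of which affects the substance. First, $R=\prod_{i\in\mc{I}}L$ is an $L$-algebra, hence certainly not a finite free $\W(k)$-module; the correct justification for checking injectivity after $\otimes_{\W(k)}K$ is simply that the source $\W(k)[Y]/(f(Y))$ is free, hence $\ell$-torsion-free, and $R$ is torsion-free, so the kernel is a torsion-free $\W(k)$-module and vanishes iff it vanishes after inverting $\ell$. Second, the distinctness of the conjugates for a fixed $s$ is asserted but not argued, and it is needed to identify the inner product with the separable polynomial $m_s$: two representatives $i_1,i_2$ with $s(i_1)=s(i_2)=s$ produce the same trace iff the corresponding embeddings $\zeta_s\mapsto\zeta^{i_1},\zeta^{i_2}$ differ by an element of $\Gal(K(\zeta_s)/K(\omega_s))=\langle q\rangle$, i.e.\ iff $i_1\equiv i_2q^m\pmod{\ell^s}$ for some $m$; since $i_1=\ell^{r-s}j_1$ and $i_2=\ell^{r-s}j_2$ with $j_1,j_2$ prime to $\ell$, this is equivalent to $i_1\equiv i_2q^m\pmod{\ell^r}$, which cannot happen for distinct elements of $\mc{I}'$.
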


\begin{proof} Since $\gamma$ generates the image of $\delta$ and has minimal polynomial $f$, the claim follows from
Theorem \ref{qInvCylAlg}.\end{proof}

We have thus complete our calculation.  To summarize, we have shown the following:

\begin{thm}\label{Main} Suppose that $k$ is a finite field of characteristic $\ell$. Let $q$ be a power of a prime distinct from $\ell$.  Denote the order of $q$ modulo $\ell$ by $w$ and put $r=\ord_\ell(q^w-1)$. Suppose that $\bar{\pi}$ is an irreducible cuspidal representation of $G=\GL_n(\F_q)$ over $k$ of degree $d<n$.  Let $P_{\bar{\pi}}$ be the projective envelope of $\bar{\pi}$ in the category of $\W(k)[G]$-modules.  Then, under the assumptions that $2\leq n<\ell$ and that $k$ is large enough to contain the $\ell$-regular $|G|$th roots of unity, $\End_{\W(k)[G]}(P_{\bar{\pi}})$ is isomorphic to $$\W(k)[Y]/\prod_{\zeta\in \mc{S}}[Y-(\zeta^i+\zeta^{iq^d}+\cdots+\zeta^{iq^{n-d}})],$$ where $\mc{S}$ is a collection of representatives for the action of $q^d$ on the $\ell^r$th roots of unity in $\W(k)$.  That is, $\End_{\W(k)[G]}(P_{\bar{\pi}})$ is isomorphic to the ring of invariants of $$\W(k)[X]/(X^{\ell^r}-1)$$ under the action $X\mapsto X^{q^d}$.\end{thm}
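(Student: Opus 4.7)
The plan is to assemble the three main inputs developed in the preceding sections. First I would apply Theorem \ref{Morita} to replace $\bar{\pi}$ by $\bar{\pi}'_1$, the cuspidal representation of $G' = \GL_{n/d}(\F_{q^d})$ attached to $1 \in \F_{q^d}^\times$. Because an equivalence of categories preserves projective envelopes and endomorphism rings, this reduces the problem to computing $\End_{\W(k)[G']}(P_{\bar{\pi}'_1})$ in the ``base'' situation of Sections \ref{S:ProjEnv}--\ref{S:CentralAction}: the roles of $n$ and $q$ are now played by $n/d$ and $q^d$, and the resulting $\ord_\ell((q^d)^{n/d} - 1) = \ord_\ell(q^n - 1)$ agrees with the $r$ of the statement (using that $w \mid n$ and the hypothesis $\ell > n$, which forces $\ord_\ell(n/w) = 0$).

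Next I would invoke the identification from Section \ref{S:ProjEnv}: the faithfully projective object $Q_{\bar{\pi}'_1} = P_0 \oplus P_{\bar{\pi}'_1}$, together with the Steinberg-scalar section $s$ and the commutative-diagram argument showing surjectivity onto the center, identifies $\End_{\W(k)[G']}(P_{\bar{\pi}'_1})$ with the image of the map $\delta : e_{\bar{\pi}'_1} Z(\W(k)[G']) \to \prod_{\pi \in \ms{R}} L$. Then Propositions \ref{Realized}, \ref{CuspVanish}, \ref{NotDivisible}, and \ref{NotDivisible2}, followed by the final computations of Section \ref{S:CentralAction}, pin this image down to the $\W(k)$-subalgebra generated by the single element $\gamma$, whose minimal polynomial is $f(Y) = (Y - n/d) \cdot g(Y)$ with $g(Y)$ the product over the nontrivial $q^d$-orbits of $\ell^r$th roots of unity.

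Finally I would apply Theorem \ref{qInvCylAlg}, with $(n, q)$ replaced by $(n/d, q^d)$, to identify $\W(k)[Y]/(f(Y))$ with the subring of $\W(k)[X]/(X^{\ell^r} - 1)$ fixed by $X \mapsto X^{q^d}$. Under the isomorphism, the generator $\gamma$ of the image of $\delta$ corresponds to $f(X) = X + X^{q^d} + \cdots + X^{q^{n-d}}$, yielding both formulations in the statement.

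Since all of the substantive technical work---the mod-$\ell^r$ congruences for primary diagonalizable classes via Lemma \ref{Signs}, the centralizer-order analysis of Lemma \ref{CentOrder}, and the uniformizer computation for $L_i$ of Proposition \ref{UnityQInv}---has already been carried out in the preceding sections, the only remaining obstacle is careful bookkeeping: verifying the parameter translation under the Morita equivalence, and matching the minimal polynomial $f$ produced in Section \ref{S:CentralAction} with the explicit product over representatives in $\mc{S}$ displayed in the statement. Both are straightforward given the setup.
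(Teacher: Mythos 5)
Your proposal is correct and follows the same route the paper takes: reduce via the Morita equivalence of Theorem \ref{Morita}, identify $\End(P_{\bar{\pi}})$ with the image of $\delta$ via the faithfully projective object of Section \ref{S:ProjEnv}, pin down that image with Propositions \ref{Realized}--\ref{NotDivisible2} and the closing computations of Section \ref{S:CentralAction}, and then match against Theorem \ref{qInvCylAlg}. Your bookkeeping check that $\ord_\ell(q^n-1)=\ord_\ell(q^w-1)$ under $\ell>n$ (via lifting the exponent, since $w\mid n$ and $\ell\nmid n/w$) is exactly the parameter translation the paper uses implicitly, so nothing is missing.
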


\begin{rem}An abstract isomorphism is useful, but it is typically more advantageous to have a specific isomorphism.  In this present case, specifying the isomorphism above amounts to giving the action of the generator $Y$ on the generic characteristic zero representations in the block coming from $\bar{\pi}$.  Tracing through our calculations (or from \cite{thesis}), one can see that, if $\bar{\pi}=\bar{\pi}_s$, our isomorphism may be chosen in such a way that the $Y$ acts on a supercuspidal representation $\pi_\tau$ (that lifts $\bar{\pi}_s$) by $$\theta(\tau)+\theta(\tau^{q^d})+\cdots+\theta(\tau^{q^{n-d}})$$ and that it acts on the generalized Steinberg representation associated to $s$ by $n/d$.\end{rem}

\section{Relation with a Conjecture of Helm}

\label{S:Helm}In this section, we discuss the relationship between this work and the conjecture of Helm discussed in Section \ref{S:Introduction}.  In the process, we will sketch some of Helm's work on this conjecture (this work will appear in an upcoming paper, \cite{Helm2}).  Recall from Section \ref{S:Introduction} that $F$ is a $p$-adic field and that $\bar{\rho}$ is a semisimple representation over $k$ of the Weil Group, $\W_F$, that corresponds to a representation, $\bar{\pi}$, of $\GL_n(F)$ which is cuspidal but not supercuspidal.  Denote the inertia subgroup of $W_F$ by $I$, let $I^{(\ell)}$ be the prime to $\ell$ part of $I$, and let $I_\ell$ be the corresponding quotient.  We continue to assume that $\ell>n$.

The simplest possibility is the case in which $n$ is the order of $q$ modulo $\ell$ and $$\bar{\rho}=\bar{\mathbbm{1}}\oplus\bar{\omega}\oplus\bar{\omega}^2+\cdots+\oplus\bar{\omega}^{n-1},$$
where $\bar{\omega}$ is the cyclotomic character of $F$ over $k$.  In this case, a deformation of $\rho$ is trivial on $I^{(\ell)}$ and so must factor through $$\W_F\to I_\ell\rtimes\Z.$$

If $\Psi$ is a topological generator for $I_\ell$ and $\Fr$ is a Frobenius element, we have the relation $\Fr\Psi\Fr^{-1}=\Psi^q$.  Hence to choose a framed deformation of $\bar{\rho}$ is to choose matrices that satisfy this relation (and reduce appropriately to $\bar{\rho}$).   Thus, putting $$R_{q,n}=\W(k)[\Fr,\Psi]/\left<\Fr\Psi \Fr^{-1}=\Psi^q\right>,$$ where the variables $\Fr$ and $\Psi$ are $n\times n$ matrices of indeterminates,  we can see that $R_{\bar{\rho}}^\boxempty$ is the completion of $R_{q,n}$ at $\m_{\bar{\rho}}$, the maximal ideal coming from $\bar{\rho}$.

We now consider an important subalgebra of $R_{q,n}$.  Recall that $$f(X)=\prod_{i\in \mc{I}}[X-(\zeta^i+\zeta^{iq}+\cdots+\zeta^{iq^{n-1}})].$$

\begin{lem}\label{FinalLemma} Let $T_1,\ldots,T_n$ be the coefficients of the characteristic polynomial of the matrix $\Fr$ and let $Y=\Tr(\Psi)$.  Then the subalgebra of $R_{q,n}$ generated by $Y$, $T_n^{-1}$, and the $T_i$ is isomorphic to $$\W(k)[T_1,\ldots,T_n^{\pm1},Y]/(\left<f(Y)\right>+\left<Y-n\right>\left<T_1,\ldots,T_n\right>).$$\end{lem}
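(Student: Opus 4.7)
The plan is to produce the obvious surjection from the presented ring onto the subalgebra and then verify that the kernel is no bigger than the stated ideal. Explicitly, I would define
$$\Phi:\W(k)[T_1,\dots,T_{n-1},T_n^{\pm1},Y]\longrightarrow R_{q,n}$$
sending each $T_i$ to the corresponding coefficient of the characteristic polynomial of $\Fr$ and $Y$ to $\Tr(\Psi)$; by construction the image of $\Phi$ is the subalgebra $A$ in the statement, so the task reduces to identifying $\ker\Phi$ with $\langle f(Y)\rangle+\langle Y-n\rangle\langle T_1,\dots,T_{n-1}\rangle$.

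The key geometric input is the defining relation $\Fr\Psi\Fr^{-1}=\Psi^q$, which implies that $\Psi$ and $\Psi^q$ are conjugate in $R_{q,n}[T_n^{-1}]$ and hence share a characteristic polynomial. At a geometric point of the relevant components of $\Spec R_{q,n}$, the eigenvalues of $\Psi$ form an $\ell$-power multiset that is stable under $x\mapsto x^q$; because the $q$-orbit of a primitive $\ell^r$-th root of unity has length exactly $n$, the only two possibilities are that all eigenvalues equal $1$ (so $Y=n$, the Steinberg component) or that they form a single nontrivial $q$-orbit (so $Y=\omega_i$ for some $i\in\mathcal{I}'$). This identifies the attainable values of $Y$ with exactly the roots of $f$, giving $f(Y)=0$. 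For the other relation, on a component where $Y=\omega_i\neq n$ the matrix $\Psi$ has $n$ pairwise distinct eigenvalues forming a single $q$-orbit, so the conjugation condition forces $\Fr$, in an eigenbasis for $\Psi$, to cyclically permute the eigenspaces. Such an $\Fr$ is a scalar times a cyclic permutation matrix, whose characteristic polynomial is of the shape $X^n-c$; consequently $T_1=\cdots=T_{n-1}=0$ on these components, yielding $(Y-n)T_i=0$ in $A$ for $1\leq i\leq n-1$.

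To conclude with injectivity of the induced surjection from the presented ring onto $A$, I would compare both sides after inverting $\ell$. By the Chinese remainder decomposition coming from $f(Y)=(Y-n)g(Y)$, the presented ring tensored with $K$ breaks up as a product of a Laurent-type algebra $K[T_1,\dots,T_{n-1},T_n^{\pm1}]$ (from $Y=n$) together with algebras of the form $K(\omega_i)[T_n^{\pm1}]$ for each $i\in\mathcal{I}'$ (from $Y=\omega_i$, with the $T_j$ for $j<n$ killed). These components correspond precisely to the geometric pieces of $\Spec(A\otimes_{\W(k)}K)$ identified in the previous step, and a direct matching of coordinate rings gives the isomorphism over $K$. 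Passage from the generic fiber to the integral level follows from $\W(k)$-flatness of the presented ring, verified by inspecting its reduction modulo $\ell$ (where $f$ has distinct factors and the relation ideal is well-behaved).

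The main obstacle in carrying this out is the clean identification of the two Chinese-remainder decompositions, and in particular the justification that the subalgebra $A$ sees only the $\bar\rho$-components of $\Spec R_{q,n}$ — i.e., that its spectrum is exhausted by the Steinberg and supercuspidal-style components. A rigorous argument here demands a careful analysis of the intersection of the defining ideal of $R_{q,n}$ with $\W(k)[Y,T_1,\dots,T_n^{\pm1}]$, together with the cyclic-matrix analysis of $\Fr$ alluded to above; these are the computational heart of the proof.
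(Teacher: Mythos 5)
Your forward direction---checking the relations on $\Omega$-points via the $q$-stability of the eigenvalue multiset of $\Psi$, using Lemma~\ref{OrbitOrder} to see that any nontrivial eigenvalue forces a full $q$-orbit of length $n$, and observing that $\Fr$ must then be (in an eigenbasis for $\Psi$) supported on a cyclic permutation so that $T_1=\cdots=T_{n-1}=0$---matches the paper's argument essentially step for step, and like the paper it silently rests on the input from \cite{Helm2} that $R_{q,n}$ is reduced and $\ell$-torsion free. You also implicitly correct what is surely a typo in the statement: the product ideal must be $\left<Y-n\right>\left<T_1,\dots,T_{n-1}\right>$, not $\left<T_1,\dots,T_n\right>$, since $T_n$ is inverted and $(Y-n)T_n$ would collapse the presented ring to $\W(k)[T_1,\dots,T_n^{\pm1}]$.

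For the converse inclusion of the kernel in the stated ideal you take a genuinely different route from the paper: rather than evaluating a putative kernel element $h$ first at points with $\Psi=I$ (to extract $(Y-n)\mid h$) and then at points with $\Psi$ a nontrivial diagonal (to extract divisibility by the $T_i$, $i<n$), you compare the two rings after inverting $\ell$ and descend using $\W(k)$-flatness of the presented ring. The strategy is sound, but the justification you give for flatness contains a concrete error: modulo $\ell$, $f$ does \emph{not} have distinct factors. All roots of $f$ reduce to $n$ in $k$ (see Corollary~\ref{PullBackModL} and the computation before Theorem~\ref{qInvCylAlg}), so $f\equiv(Y-n)^{\lceil\ell^r/n\rceil}\pmod{\ell}$, a single repeated factor. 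The flatness claim is nevertheless true, but needs a different argument. One clean way: set $Z=Y-n$ and $\tilde g(Z)=g(Z+n)$, observe that the relation ideal in $A:=\W(k)[Z,T_1,\dots,T_{n-1},T_n^{\pm1}]$ is $ZJ$ with $J=\langle\tilde g(Z),T_1,\dots,T_{n-1}\rangle$, and use the short exact sequence $0\to ZA/ZJ\to A/ZJ\to A/ZA\to 0$. Since $A$ is a domain, $ZA/ZJ\cong A/J\cong(\W(k)[Z]/\tilde g(Z))[T_n^{\pm1}]$ with $\tilde g$ monic, and $A/ZA\cong\W(k)[T_1,\dots,T_n^{\pm1}]$; both are $\ell$-torsion free, hence so is the presented ring. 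With that replacement your descent argument goes through, but as written the flatness step would not survive scrutiny.
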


\begin{proof} From work in to be published in \cite{Helm2}, $R_{q,n}$ is reduced and $\ell$-torsion free.  Thus to show the relations we have claimed, it suffices to show that they hold on the $\Omega$-points of $[R_{q,n}]_{\m_{\bar{\rho}}}$ where $\Omega$ is an algebraically closed extension of $\Q_\ell$.  Let $x$ be such a point.  Then $\Psi_x=x(\psi)$ and $\Fr_x=x(\Fr)$ are $n\times n$ matrices over $\Omega$ such that $\Fr_x\Psi_x\Fr_x^{-1}=\Psi_x^q$ and $\Psi_x$ reduces to the identity modulo $\ell$.

In particular, the eigenvalues of $\Psi_x$ are closed under exponentiation by $q$.  Since $\Psi_x$ reduces to the identity, we conclude that all the eigenvalues are $\ell^r$th roots of unity. If the eigenvalues are not trivial, we may assume without loss of generality that $\Psi_x=\diag(\zeta,\zeta^q,\ldots,\zeta^{q^{n-1}}),$ where $\zeta$ is a nontrivial $\ell^r$th root of unity.  Thus $f(\Tr(\Psi_x))=0$ and the relation between $\Fr_x$ and $\Psi_x$ shows that $\Fr_x$ has the form $$\left(\begin{matrix} 0 & * & 0 & \cdots & 0\\ 0 & 0 & * & \cdots & 0 & \\ 0 & 0 & 0 & \cdots & 0 \\ \vdots & \vdots & \vdots & \ddots & \vdots \\ * & 0 & 0 & \cdots & 0\end{matrix}\right).$$ Thus, aside from the determinant, the coefficients of the characteristic polynomial of $\Fr_x$ vanish and the relations $\left<T_1,\ldots,T_{n-1}\right>$ are satisfied.  If, on the other hand, the eigenvalues of $\Psi_x$ are all equal to 1, we have $\Tr(\Psi_x)-n=0$ (so that in particular $f(\Tr\Psi_x)=0$).  Thus the relations are satisfied in either case.

Conversely, $f$ is the minimal polynomial of $Y$ because we can always find a point $x$ such that the minimal polynomial of $x(\Psi)$ is a given factor $f$.  Indeed, we need only choose an appropriate $\zeta$, put $x(\Psi)=\diag(\zeta,\zeta^q,\ldots,\zeta^{q^{n-1}}),$ and take $\Fr_x$ to be the appropriate permutation matrix.  If, then, $h$ is a polynomial in $$[\W(k)[Y]/f(Y)][T_1,\ldots,T_n^{\pm1}]$$ that vanishes as an element of $R_{q,n}$, we know that, for any appropriate $\Omega$, $h$ must vanish on an $\Omega$ point of the form $(\Fr_x,\mathrm{I})$.  Thus $h$ is divisible by $Y-n$.  Evaluating $Y$ at a nontrivial $\ell^r$th root of unity and using that fact that $T_n$ is a unit in $R_q,n$, we see that any monomial appearing in $h$ must also be divisible by $T_i$ for some $i<n$.\end{proof}

The discussion of Section \ref{S:Introduction} characterizes $A_\pi$ in terms of $\End(P_{\bar{\pi}})$ and the ideal $I_0$ described there.  The remark following Theorem \ref{Main} shows that $I_0$ is generated by $Y-n$, where $Y$ is our generator for $\End(P_{\bar{\pi}})$.  Hence applying our result, Theorem \ref{Main}, and Lemma \ref{FinalLemma} we see that $A_\pi$ is isomorphic to the subalgebra of $(R_{\bar{\rho}}^\boxempty)^{\GL_n}$ generated by the trace of $\Psi$ and the coefficients of the characteristic polynomial of $\Fr$.  To be explicit, this isomorphism takes our generator for $\End(P_{\bar{\sigma}})$ to the trace of $\Psi$ and the $T_i$ in Helm's expression for $A_{\bar{\pi}}$ to the coefficients of the characteristic polynomial of $\Fr$.  In particular, our calculations are compatible with the conjecture of Helm. Certainly, one would need to know that this map extends to an isomorphism on all of $(R_{\bar{\rho}}^\boxempty)^{\GL_n}$ and that the isomorphism may be given in a way that is compatible with local Langlands.

In the general case, $\bar{\rho}$ has the form $$\bar{\rho}=\bar{\rho}_0\oplus\bar{\omega}\bar{\rho}_0\oplus\cdots\oplus \bar{\omega}^{e-1}\bar{\rho}_0,$$ where $\bar{\rho}_0$ is an irreducible representation and $e$ is the order of the orbit of $\bar{\rho}_0$ under the action of $\bar{\omega}$.  In the general case, $e$ is not necessarily equal to $w$, the order of $q$ modulo $\ell$, but we do of course have $e|w$.   If $\bar{\pi}_0$ is the supercupsidal representation corresponding to $\bar{\rho}_0$, the supercuspidal support of $\bar{\pi}$ is $\sum_{i=0}^{e-1}\bar{\omega}^i\bar{\pi}_0$.

This more general situation is a bit more complex for a couple of reasons.  Firstly, Helm's isomorphism $$A_{\bar{\pi}}\to\End(P_{\bar{\sigma}})[T_1,\ldots,T_{e-1},T_{e}^{\pm1}]/\left<T_1,\ldots,T_{e}\right>\cdot I_0$$ depends on the choice of a certain type of lift of $\bar{\pi}_0$ to a characteristic zero representation.  Furthermore,  a deformation of $\bar{\rho}$ need not be trivial on $I^{(\ell)}$.  To handle this latter difficulty, Helm implements a construction which we now sketch (Helm's construction also accounts for the choice of lift).

Since the order of $I^{(\ell)}$ is prime to $\ell$, the $k$-module $\bar{\rho}_0|_{I^{(\ell)}}$ is semisimple and we may write $$\bar{\rho}_0|_{I^{(\ell)}}=\bar{\tau}_1\oplus\cdots\oplus\bar{\tau}_r,$$ where $\bar{\tau}_i$ is irreducible.  Certainly $\W_F$ acts on the $\bar{\tau}_i$ by conjugation and we denote by $\W_E$ the stabilizer of $\bar{\tau}_1$.  Again since the order of $I^{(\ell)}$ is prime to $\ell$, there is a unique lift, $\tau_1$, of $\bar{\tau}_1$ to $\W(k)$.  Moreover:

\begin{lem}\label{CHT1} There is a unique extension of $\tau_1$ to $I\cap\W_E$ with prime to $\ell$ determinant.  $\tau_1$ extends to a representation of $\W_E$.\end{lem}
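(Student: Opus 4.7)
The plan is to handle the two assertions in turn, both anchored in a structural analysis of $I \cap \W_E$. First I would observe that $I^{(\ell)} \subseteq I \cap \W_E$: conjugation by an element of $I^{(\ell)}$ is an inner automorphism of $I^{(\ell)}$ and so preserves $\bar{\tau}_1$ up to isomorphism. Consequently $(I \cap \W_E)/I^{(\ell)}$ is a closed subgroup of the pro-cyclic pro-$\ell$ group $I/I^{(\ell)} \cong \Z_\ell$, hence itself pro-cyclic pro-$\ell$. Moreover $\tau_1$ is $\W_E$-stable under conjugation: for $g \in \W_E$, the conjugate $\tau_1^g$ is a $\W(k)$-lift of $\bar{\tau}_1^g = \bar{\tau}_1$, and since $I^{(\ell)}$ has prime-to-$\ell$ order such lifts are unique, giving $\tau_1^g \cong \tau_1$. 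Because the quotient is pro-cyclic, Schur's lemma furnishes an intertwiner realizing the conjugation action of a topological generator, from which one builds at least one continuous extension $\sigma_0$ of $\tau_1$ to $I \cap \W_E$.

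For the uniqueness clause, two such extensions of $\tau_1$ differ by a continuous character $\chi\colon (I \cap \W_E)/I^{(\ell)} \to \W(k)^\times$, which must take values in the maximal pro-$\ell$ subgroup $1 + \ell\W(k)$ of $\W(k)^\times = \mu_{|k|-1} \times (1+\ell\W(k))$, since its source is pro-$\ell$. Decomposing $\det \sigma_0 = \eta \cdot \psi$ with $\eta$ valued in the prime-to-$\ell$ roots of unity (agreeing with $\det \tau_1$ on $I^{(\ell)}$) and $\psi\colon (I \cap \W_E)/I^{(\ell)} \to 1 + \ell\W(k)$, the condition that $\sigma_0 \otimes \chi$ have prime-to-$\ell$ determinant becomes $\chi^{\dim \tau_1} = \psi^{-1}$. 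Since $1 + \ell\W(k)$ is a $\Z_\ell$-module, and $\dim \tau_1 \leq n < \ell$ is a $\Z_\ell$-unit, it is uniquely divisible by $\dim \tau_1$; hence a unique such $\chi$ exists. The resulting twist $\sigma$ is therefore the unique extension with prime-to-$\ell$ determinant.

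For the second statement, $\sigma$ is automatically $\W_E$-stable: for $g \in \W_E$ the conjugate $\sigma^g$ restricts to $\tau_1^g \cong \tau_1$ on $I^{(\ell)}$ and has determinant $\det(\sigma) \circ c_g$, still prime-to-$\ell$ valued, so by the uniqueness just established $\sigma^g \cong \sigma$. The quotient $\W_E/(I \cap \W_E)$ is topologically generated by a Frobenius $\Fr_E$, so one extends $\sigma$ by picking any intertwiner $B \in \GL(V)$ realizing $\sigma^{\Fr_E} \cong \sigma$ (available and unique up to scalar by Schur applied to the irreducible $\sigma$) and declaring $\sigma(\Fr_E) = B$. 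The main delicate step is the uniqueness half of part one, which requires $\dim \tau_1$ to be invertible $\ell$-adically so that $1 + \ell\W(k)$ is uniquely $\dim \tau_1$-divisible; this is the only place where the hypothesis $\ell > n$ enters the argument, with the remaining ingredients (pro-cyclicity of the relevant quotients, $\W_E$-stability via uniqueness of the lift, and Schur's lemma) being standard.
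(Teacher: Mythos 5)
The paper proves this statement only by citing Lemma 2.4.11 of Clozel--Harris--Taylor, so there is no in-paper argument to compare against beyond that reference. Your write-up is a correct reconstruction of the standard Clifford-theoretic proof underlying that citation, with the expected ingredients: uniqueness of lifts from the prime-to-$\ell$ group $I^{(\ell)}$ to get $\W_E$-stability of $\tau_1$, pro-cyclicity of $(I\cap\W_E)/I^{(\ell)}$ and Schur's lemma to produce some extension, the decomposition $\W(k)^\times=\mu_{|k|-1}\times(1+\ell\W(k))$ together with invertibility of $\dim\tau_1$ in $\Z_\ell$ (the one place $\ell>n$ is used) to single out the unique extension with prime-to-$\ell$ determinant, and a Frobenius/Schur step for the final extension to $\W_E$.
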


\begin{proof} See Lemma 2.4.11 of \cite{CHT}.\end{proof}

We note that the extension of $\tau_1$ to $\W_E$ is not unique, but we will choose such an extension and denote it by $\tau_1$ as well.  We remark this choice of extension corresponds to the lift of $\bar{\pi}_0$ that must be chosen to give Helm's isomorphism on $A_{\bar{\pi}}$.

Let $\rho^\boxempty$ be the universal framed deformation of $\bar{\rho}$.   Then, since we have extended $\tau_1$ to $\W_E$, $\W_E$ acts on the free $R_{\bar{\rho}}^\boxempty$-module $$\rho^\dagger=\Hom_{I^{(\ell)}}(\tau_1,\rho^\boxempty)$$ and $I^{(\ell)}$ acts trivially.  Thus we may pick $\Psi_E$ and $\Fr_E$ for $\W_E$ as we did for $\W_F$ and proceed as in the first case (that is, we let $\Psi_E$ be a topological generator of prime to $\ell$ part of inertia for $\W_E$ and we let $\Fr_E$ be a Frobenius element of $\W_E$). In this case, we have the relation $\Fr_E\Psi_E\Fr_E^{-1}=\sigma^{q^{n/e}}$.  Thus we get a map $$R_{q^e,n/e}\to R_{\bar{\rho}}^\boxempty$$ that takes the entries of $\Fr$ and $\Psi_E$ to those of $\rho^\dagger(\Fr_E)$ and $\rho^\dagger(\Psi_E)$, respectively.

\begin{lem} The map $R_{q^e,n/e}\to R_{\bar{\rho}}^\boxempty$ is injective and takes $\Gl_e$-invariants to $\Gl_n$-invariants.\end{lem}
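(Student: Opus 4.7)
The plan is to separate the two claims: prove injectivity by an $\Omega$-point argument using reducedness of both rings, and derive the invariance statement from an analysis of how a framing change on $\rho^\boxempty$ translates to a change of basis on the multiplicity module $\rho^\dagger$.

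For injectivity, by Helm's forthcoming results (already invoked in Lemma \ref{FinalLemma}) the source $R_{q^e,n/e}$ is reduced and $\ell$-torsion free, and $R_{\bar{\rho}}^\boxempty$ inherits the same properties under our hypotheses. It thus suffices to prove surjectivity on $\Omega$-points: every $\Omega$-point of $[R_{q^e,n/e}]_{\m_{\bar{\rho}}}$, with $\Omega$ an algebraic closure of $\Q_\ell$, should lift to an $\Omega$-point of $R_{\bar{\rho}}^\boxempty$. Such a point is a pair of matrices $(F,P)$ of the prescribed size satisfying $FPF^{-1}=P^{q^e}$ with $P$ reducing to the identity. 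From such a pair I would build the continuous representation $\phi\colon\W_E\to\GL_{n/e}(\Omega)$ that is trivial on $I^{(\ell)}\cap\W_E$ with $\phi(\Fr_E)=F$ and $\phi(\Psi_E)=P$, and then set
\[
\rho=\Ind_{\W_E}^{\W_F}(\tilde{\tau}_1\otimes\phi).
\]
Clifford theory gives $\bar{\rho}_0\cong\Ind_{\W_E}^{\W_F}\tilde{\tau}_1$, and together with the decomposition $\bar{\rho}=\bigoplus_{i=0}^{e-1}\bar{\omega}^i\bar{\rho}_0$ this shows that $\rho$ is a deformation of $\bar{\rho}$. Mackey's formula applied to $\rho|_{I^{(\ell)}}$ then isolates $\tilde{\tau}_1\otimes\phi$ as the $\tau_1$-isotypic component, so that Helm's construction $\rho\mapsto\rho^\dagger=\Hom_{I^{(\ell)}}(\tau_1,\rho)$ recovers $(F,P)$ and completes the lift.

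For the invariance claim, a framing change $\rho^\boxempty\mapsto g\rho^\boxempty g^{-1}$ for $g\in\GL_n(\W(k))$ induces an $R_{\bar{\rho}}^\boxempty$-module isomorphism $\rho^\dagger\to\Hom_{I^{(\ell)}}(\tau_1,g\rho^\boxempty g^{-1})$ via $\phi\mapsto g\circ\phi$, and a direct computation shows this intertwines the $\W_E$-actions. Consequently the $\GL_n$-action on the image of $R_{q^e,n/e}$ in $R_{\bar{\rho}}^\boxempty$ factors through the natural $\GL_e$-conjugation action on $\Fr$ and $\Psi$ in $R_{q^e,n/e}$, and hence any $\GL_e$-invariant in the source must land in $(R_{\bar{\rho}}^\boxempty)^{\GL_n}$.

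The main obstacle will be the Mackey-theoretic computation in the injectivity step: one has to verify that $\Ind_{\W_E}^{\W_F}(\tilde{\tau}_1\otimes\phi)|_{I^{(\ell)}}$ contains $\tau_1$ with the correct multiplicity, that the induced $\W_E$-action on the multiplicity space is $\phi$ itself, and that the remaining double-coset contributions in Mackey's formula produce Galois conjugates of $\bar{\tau}_1$ inequivalent to $\bar{\tau}_1$ and so drop out of $\Hom_{I^{(\ell)}}(\tau_1,-)$. Carrying out this bookkeeping, together with verifying that the resulting $\rho$ really does reduce to $\bar{\rho}$, finishes both parts of the lemma.
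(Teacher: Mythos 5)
Your proposal follows essentially the same route as the paper: the reducedness/$\Omega$-point reduction is what the cited Helm preprint provides, and the explicit inverse you build via $\rho=\Ind_{\W_E}^{\W_F}(\tilde{\tau}_1\otimes\phi)$ is precisely the content of CHT Lemma 2.4.12 and Corollary 2.4.13 that the paper invokes, while your framing-change argument for the invariance claim is the standard one the paper leaves to \cite{Helm2}. The Mackey-theoretic bookkeeping you flag as the remaining obstacle is exactly what those CHT results carry out, so the gap you acknowledge is the same gap the paper fills by citation.
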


\begin{proof}  Again these are results to be published (\cite{Helm2}).  Injectivity follows essentially from the fact that one may recover a deformation, $\rho$, of $\bar{\rho}$ from $\Hom_{I^{(\ell)}}(\tau_1,\rho)$.  This fact is
Lemma 2.4.12 and Corollary 2.4.13 of \cite{CHT}.\end{proof}

Thus, applying our work in the first case and Theorem \ref{Main} we see that:

\begin{thm} The coefficients of the  characteristic polynomials of $\Fr_E$ and $\Psi_E$ and generate a subalgebra of $(R_{\bar{\rho}}^\boxempty)^{\GL_n}$ isomorphic to $A_{\bar{\pi}}$.\end{thm}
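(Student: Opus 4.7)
The plan is to transport the argument of the simple case along Helm's injection $R_{q^e,n/e}\hookrightarrow R_{\bar{\rho}}^\boxempty$ supplied by the preceding lemma. Three ingredients are needed: a computation of $\End(P_{\bar{\sigma}})$ for $\bar{\sigma}$ on the smaller group $\GL_{n/e}(\F_{q^e})$ via Theorem \ref{Main}; the analogue of Lemma \ref{FinalLemma} for $R_{q^e,n/e}$; and the invariants statement built into the preceding lemma.

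First I would apply Theorem \ref{Main} to $\bar{\sigma}$ on $\GL_{n/e}(\F_{q^e})$. This gives an explicit isomorphism between $\End(P_{\bar{\sigma}})$ and a ring of invariants in $\W(k)[X]/(X^{\ell^r}-1)$; the remark following Theorem \ref{Main} furthermore identifies, under the chosen generator $Y$, the ideal $I_0$ appearing in Helm's presentation of $A_{\bar{\pi}}$ as the principal ideal generated by the difference between $Y$ and its action on the relevant generalized Steinberg representation.

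Next I would rerun the proof of Lemma \ref{FinalLemma} verbatim with $(q,n)$ replaced by $(q^e,n/e)$. That proof relies only on the facts that $R_{q,n}$ is reduced and $\ell$-torsion free and that its closed $\Omega$-points (for a sufficiently large algebraically closed extension $\Omega$ of $\Q_\ell$) can be parametrized by pairs of matrices $(\Fr_x,\Psi_x)$ satisfying the Weil-group relation with $\Psi_x$ of $\ell$-power order; both properties persist for $R_{q^e,n/e}$. After substituting the description of $\End(P_{\bar{\sigma}})$ and the location of $I_0$ from the first step, the resulting presentation of the subalgebra of $R_{q^e,n/e}$ generated by $\Tr(\Psi)$ and the coefficients of the characteristic polynomial of $\Fr$ reproduces Helm's formula for $A_{\bar{\pi}}$. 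Finally, applying the preceding lemma, the injection $R_{q^e,n/e}\hookrightarrow R_{\bar{\rho}}^\boxempty$ carries these generators to the coefficients of the characteristic polynomials of $\Fr_E$ and $\Psi_E$ and lands inside $(R_{\bar{\rho}}^\boxempty)^{\GL_n}$.

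The main obstacle is bookkeeping. The ideal $I_0$ in Helm's presentation is defined via a choice of characteristic zero lift of $\bar{\pi}_0$, whereas on the Galois side the corresponding datum is the chosen extension of $\tau_1$ to $\W_E$ provided by Lemma \ref{CHT1}. One must verify that these two choices match under the local Langlands correspondence, so that $I_0$ really does pull back to the expected principal ideal $\langle Y - n/e\rangle$ in $R_{q^e,n/e}$. Once that compatibility is pinned down, the remainder of the argument is a formal algebraic transport through the injection.
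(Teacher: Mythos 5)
Your proposal matches the paper's approach: the paper's proof is a one-sentence citation of the first-case computation (the $R_{q^e,n/e}$ analogue of Lemma \ref{FinalLemma}), Theorem \ref{Main}, and the injection lemma, and your elaboration is the same argument. The bookkeeping concern you flag about matching the chosen extension of $\tau_1$ with Helm's choice of lift of $\bar{\pi}_0$ (so that $I_0$ pulls back to the right principal ideal) is exactly what the paper acknowledges in the remark following Lemma \ref{CHT1}, and, like you, the paper defers the detailed verification to the forthcoming \cite{Helm2}.
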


Again one would want to see that this isomorphism onto a subalgebra is a full isomorphism satisfying Helm's conjecture.  One would also want to drop the assumption that $\ell>n$.  More generally, we have restricted to the case where our semi-simple representation corresponds to a cuspidal representation and one would want to consider the more general case.

\section*{Acknowledgements} 

The author is grateful to David Helm for his endless guidance and patience.  He also wishes to thank Keenan Kidwell for many productive and enlightening discussions.  Finally, he would like to think Mathoverflow user Jef for his post on Mar. 18th which pointed to a result that greatly simplified the proof.

\bibliography{Sources}

\end{document}